\tikzset{
dot/.style = {circle, fill, minimum size=#1,
              inner sep=0pt, outer sep=0pt},
dot/.default = 6pt 
}
\newcommand{\C}{\mathbb{C}}
\renewcommand{\Im}{\text{Im }}
\newcommand{\R}{\mathbb{R}}
\renewcommand{\H}{\mathcal{H}}
\newcommand{\Z}{\mathbb{Z}}
\newcommand{\N}{\mathbb{N}}
\newcommand{\bpm}{\begin{pmatrix}}
\newcommand{\epm}{\end{pmatrix}}
\renewcommand{\phi}{\varphi}
\renewcommand{\epsilon}{\varepsilon}
\newcommand{\ad}{\text{Ad}}
\newcommand{\F}{\mathcal{F}}
\newcommand{\g}{\mathfrak{g}}
\renewcommand{\>}{\rangle}
\newtheorem{theorem}{Theorem}[section]
\newtheorem{lemma}[theorem]{Lemma}
\newtheorem{corollary}[theorem]{Corollary}
\newtheorem{proposition}[theorem]{Proposition}
\theoremstyle{definition}
\newtheorem{remark}[theorem]{Remark}
\newtheorem{example}[theorem]{Example}
\newtheorem{definition}[theorem]{Definition}
\newlength\mylen
\renewcommand{\i}{\sqrt{-1}}
\renewcommand{\u}{\mathfrak{u}}
\newcommand{\framespace}{\F^{d,N}}
\newcommand{\tight}{\framespace_{\boldsymbol{N/d}}}
\newcommand{\fun}{\tight(\boldsymbol{1})}
\renewcommand{\Tilde}{\widetilde}
\newcommand{\eigenlift}{{\tt Eigenlift}\xspace}
\newcommand{\tr}{\operatorname{tr}}
\newcommand{\Ad}{\operatorname{Ad}}
\renewcommand{\ad}{\operatorname{ad}}
\newcommand{\diag}{\operatorname{diag}}
\newcommand{\vol}{\operatorname{vol}}
\def\ps@pprintTitle{%
  \let\@oddhead\@empty
  \let\@evenhead\@empty
  \let\@oddfoot\@empty
  \let\@evenfoot\@empty
}
\title{Sampling Finite Unit Norm Tight Frames Using Symplectic Geometry}
\author{Mason Faldet}
\author{Clayton Shonkwiler}
\affil{Department of Mathematics, Colorado State University, Fort Collins, CO, USA}
\date{}
\begin{document}

\maketitle

\begin{abstract}
Unit-norm tight frames in finite-dimensional Hilbert spaces (FUNTFs) are fundamental in signal processing, offering optimal robustness to noise and measurement loss. In this paper we introduce the \eigenlift algorithm for sampling random FUNTFs. Our approach exploits the symplectic geometry of the FUNTF space, which we characterize as a symplectic reduction of frame space by a symmetry group. We then define a Hamiltonian torus action on this reduced space whose momentum map induces a fiber bundle structure. The algorithm proceeds by sampling a point from the base space, which is a convex polytope, lifting it deterministically to a point on the corresponding fiber, then acting on this point by a random element of the torus to obtain a random FUNTF. We implement the method in Python and validate it in low-dimensional settings where it is computationally feasible to sample the base polytope via rejection sampling.
\end{abstract}

\section{Introduction}

A \emph{frame} in a Hilbert space $\H$ is a sequence of vectors $\{f_i\}_{i \in I} \subset \H$ such that for every $v \in \H$,
\begin{linenomath*}
    \begin{align*}
    a \|v\|^2 \leq \sum_{i \in I} |\<v,f_i\>|^2 \leq b\|v\|^2
\end{align*}
\end{linenomath*}
for some $0 < a\leq b$ called the \emph{frame bounds}. When $a=b$ the frame is called \emph{tight}. A finite collection $\{f_i\}_{i=1}^N \subset \C^d$ is a frame if and only if $\operatorname{span} \{f_1, \dots, f_N\} = \C^d$; that is, finite frames are equivalent to spanning sets. 

Our main focus is on finite tight frames in which each all of the frame vectors are unit vectors; that is, $\|f_i\| = 1$ for all $i = 1, \dots , N$. These are exactly the \emph{finite unit-norm tight frames} (FUNTFs). FUNTFs have proved useful in a variety of contexts because they provide robust representations of signals with optimal resilience to noise and random erasures of measurements \cite{app1, app2, app3}.

In this paper, we introduce the \eigenlift algorithm for sampling FUNTFs uniformly from a natural probability measure on the space of FUNTFs. This algorithm leverages the toric symplectic structure on the space of FUNTFs described in \cite{shonk}. This structure involves a specific toric action on the frame space and a many-to-one map from the frame space to a convex polytope (the \emph{eigenstep polytope}) that is invariant under the torus action. Heuristically, the process consists of sampling a point from Lebesgue measure on the eigenstep polytope, identifying a frame in the fiber over that point, and applying a random element of the torus to the resulting frame.

We have implemented our algorithm in Python and used this implementation to conduct small-scale experiments. A key challenge lies in efficiently sampling from the eigenstep polytope; in principle this can be done using a standard Markov chain, but in practice convergence seems to be quite slow in high dimensions. For our proof of concept, we sampled the eigenstep polytope by rejection sampling from a bounding box; unfortunately, the rejection probability goes rapidly to 1 as the dimension of the space of FUNTFs increases, so this approach is only practical in low dimensions.

\paragraph{Outline of the paper} \Cref{sec:background} gives a summary of key definitions and results from frame theory and symplectic geometry that are used in this work. We then describe the symplectic structure on a suitable quotient of the space of finite unit-norm tight frames in \cref{sec:symplectic structures on FUNTF space}. In fact, this space is highly symmetric: an open, dense subset admits a Hamiltonian action of a half-dimensional torus, making this subset into a toric symplectic manifold. We describe this toric structure in \cref{sec:toric geometry} and then use it to define the \eigenlift algorithm in \cref{alg:sampling FUNTF}. \cref{sec:implementation} summarizes results from our low-dimensional experiments and the paper concludes with \cref{sec:conclusion}, which includes some discussion and future directions.

\section{Background on Frame Theory and Symplectic Geometry}
\label{sec:background}

\subsection{Concepts From Frame Theory}

\label{frame prelim}
%

 Let $\framespace$ denote the set consisting of all frames of $N$ vectors in $\C^d$. View $\mathcal{F}^{d,N}$ as a subset of $\C^{d\times N}$ by identifying a frame $\{f_i\}_{i = 1}^N \in \framespace$ with the $d \times N$ matrix $F := \begin{bmatrix}
    f_1|& \cdots &|f_N 
\end{bmatrix}$. The frame space $\framespace$ is the complement of the vanishing locus of the real polynomial function
\begin{linenomath*}
    \begin{align*}
    \begin{bmatrix}
        f_1| & \cdots & |f_N
    \end{bmatrix} \mapsto \sum_{\substack{\{i_1, \dots, i_d\} \\ \subset \{1, \dots, N\}} } \left|\det \begin{bmatrix}
        f_{i_1}| & \cdots & |f_{i_d}
    \end{bmatrix}\right|^2,
\end{align*}
\end{linenomath*}
so $\mathcal{F}^{d,N}$ is an open, dense set in $\C^{d,N}$. \par

For any $F \in \framespace$, define three related operators:
\begin{enumerate}[(i)]
    \item The \emph{analysis} operator $\C^d \to \C^N$ defined by $v \mapsto F^*v$, or equivalently
    \begin{align*}
        v \mapsto (\<v,f_1\>, \dots, \<v,f_N\>).
    \end{align*}
    \item The \emph{synthesis operator} $\C^N \to \C^d$ defined by $w \mapsto Fw$, or equivalently 
    \begin{linenomath*}
    \begin{align*}
        (w_1, \dots, w_N) \mapsto \sum_{i = 1}^N w_if_i.
    \end{align*}
    \end{linenomath*}

    \item The \emph{frame operator} $\C^d \to \C^d$ is the composition of the analysis and synthesis operators; that is, it is the operator $v \mapsto FF^* v$, or equivalently
	\[
		v \mapsto \sum_{i=1}^N \<v,f_i\>f_i.
	\] 
\end{enumerate}

We can re-interpret tightness in terms of the frame operator: a frame $F \in \framespace$ is tight if and only if $FF^* = a \mathbb{I}_d$ for some $a > 0$; if we want to emphasize the constant $a$, we will call the frame \emph{$a$-tight}. If for every $i \in \{1, \dots, N\}$,  $\|f_i\|^2 = r$ for some $r > 0$ we say $F$ is \emph{equal norm} and if $r = 1$ we call $F$ \emph{unit norm}. Lastly, we say $F$ is \emph{full spark} if any size $d$ subset $\{f_{i_1}, \dots, f_{i_d}\}$ of $F$ is a basis of $\C^d$.

In this work, we restrict our focus to finite unit norm tight frames (FUNTFs). These frames are highly structured as we will see now.  Let $F = \{f_i\}_{i = 1}^N \in \mathcal{F}^{d,N}$ be a FUNTF. By the cyclic invariance of trace we have
 \begin{linenomath*}
\begin{align}
    N = \sum_{i = 1}^N \|f_i\|^2 =  \tr F^*F = \tr F F^* = \sum_{i = 1}^d \lambda_i,
    \label{trace}
\end{align}
 \end{linenomath*}
where $\{\lambda_i\}_{i=1}^d$ is the spectrum of the frame operator. But since $F$ is tight, $FF^* = a \mathbb{I}_d$ for some $a > 0$, so $\lambda_i = a$ for each $i = 1, \dots,d$. After substituting these equalities into \eqref{trace} we see that $\lambda_i = N/d$. Hence, if $F$ is a FUNTF, then $F$ is $N/d$-tight and the eigenvalues of $FF^*$ are $(\frac{N}{d}, \dots, \frac{N}{d})$. \par 

\begin{figure}[h]
\begin{center}
    \begin{tikzcd}[column sep=tiny, row sep=tiny]
    {\textcolor{blue}{\mu_{N,1}}}  & {\textcolor{blue}{\mu_{N,2}}} & \cdots   & {\textcolor{blue}{\mu_{N,d-2}}}  & {\textcolor{blue}{\mu_{N,d-1}}}  & {\textcolor{blue}{\mu_{N,d}}}\\
    {\textcolor{blue}{\mu_{N-1,1}}}  & {\textcolor{blue}{\mu_{N-1,2}}}  & \cdots  & {\textcolor{blue}{\mu_{N-1,d-2}}}  & {\textcolor{blue}{\mu_{N-1,d-1}}}  & {\mu_{N-1,d}} \\
    {\textcolor{blue}{\mu_{N-2,1}}}  & {\textcolor{blue}{\mu_{N-2,2}}}  & \cdots  & {\textcolor{blue}{\mu_{N-2,d-2}}}  & {\mu_{N-2,d-1}}  & {\mu_{N-2,d}}\\
    \vdots & \vdots & & \vdots & \vdots & \vdots \\
    {\textcolor{blue}{\mu_{N-d+2,1}}}  & {\textcolor{blue}{\mu_{N-d+2,2}}}  & \cdots  & {\mu_{N-d+2,d-2}}  & {\mu_{N-d+2,d-1}}  & {\mu_{N-d+2,d}}\\    
    {\textcolor{blue}{\mu_{N-d+1,1}}}  & {\mu_{N-d+1,2}}  & \cdots  & {\mu_{N-d+1,d-2}}  & {\mu_{N-d+1,d-1}}  & {\mu_{N-d+1,d}}\\{\mu_{N-d,1}}  & {\mu_{N-d,2}}  & \cdots  & {\mu_{N-d,d-2}}  & {\mu_{N-d,d-1}}  & {\mu_{N-d,d}}\\
    \vdots & \vdots & & \vdots & \vdots & \vdots \\ {\mu_{d,1}}  & {\mu_{d,2}}  & \cdots  & {\mu_{d,d-2}}  & {\mu_{d,d-1}}  & {\mu_{d,d}} \\
    {\mu_{d-1,1}}  & {\mu_{d-1,2}}  & \cdots  & {\mu_{d-1,d-2}}  & {\mu_{d-1,d-1}}  & 0 \\
    {\mu_{d-2,1}}  & {\mu_{d-2,2}}  & \cdots  & {\mu_{d-2,d-2}}  & 0 & &  \\
    \vdots &  \vdots & \iddots & \iddots \\
    \mu_{2,1}  & \mu_{2,2}  & 0 \\
    \mu_{1,1}  & 0 \\
    0
    \end{tikzcd}
\end{center}
\caption{Eigensteps of a generic FUNTF which, by \eqref{nonincreasingSteps} and \eqref{Weyls}, are such that the columns, rows, and upward diagonals are nonincreasing. Moreover, the values in blue are $N/d$ and the sum of the entries in the row corresponding to $S_k$ is equal to $k$. The independent eigensteps are all those colored black, excluding the rightmost nonzero term in each row.}
\label{fig:eigensteps}
\end{figure}

 The eigenvalues of the \emph{partial frame operators} $S_k := f_1f_1^* + \cdots f_kf_k^*$ will also be of importance. By virtue of being a sum of rank $1$ matrices, $\operatorname{rank} S_k \leq \min\{k,d\}$ and $S_k$ has at most $\min \{k,d\}$ nonzero eigenvalues. For a fixed $k = 1, \dots, N$, let 
 \begin{linenomath*}
     \begin{align}
     \mu_{k,1} \geq \mu_{k,2} \geq \cdots \geq \mu_{k_d}
     \label{nonincreasingSteps}
    \end{align}
 \end{linenomath*}
 be the eigenvalues of $S_k$ arranged in nonincreasing order. The values $\mu_{kj}$ are called \emph{eigensteps}~\cite{fickus}. Because $S_k$ is a rank 1 perturbation of $S_{k-1}$ i.e. $S_k = S_{k-1} + f_kf_k^*$, Weyl's Inequality applies so that the eigensteps satisfy the interlacing inequalities 
 \begin{linenomath*}
     \begin{align}
         \cdots \geq \mu_{k,j} \geq \mu_{k-1,j} \geq \mu_{k,j+1} \geq \mu_{k-1,j+1} \geq \cdots
         \label{Weyls}
     \end{align}
 \end{linenomath*}
 for each $k = 2, \dots, N$ and $j = 1, \dots, \min\{k,d\}$. In \cref{fig:eigensteps} the eigensteps are arranged into a table in which the rows, columns, and upward diagonals are nonincreasing as a consequence of \eqref{nonincreasingSteps} and \eqref{Weyls}. Since $S_N = FF^*$, we see that $\mu_{N,i} = N/d$ for each $i = 1,\dots, d$. This, along with \eqref{Weyls}, requires all the values colored blue in Figure~\ref{fig:eigensteps} to also be $N/d$. Lastly, applying the cyclic invariance of trace as in \eqref{trace} we have 
 \begin{linenomath*}
     \begin{align}
     \sum_{j = 1}^{\min\{k,d\}} \mu_{k,j} = \sum_{j = 1}^k \|f_j\|^2 = k
     \label{rowsum}
 \end{align}
 \end{linenomath*}
 so that the values in the row corresponding to $S_k$ sum to $k$. It follows that the independent eigensteps are those colored black in Figure~\ref{fig:eigensteps}, except for the rightmost eigenstep in each row, which can be expressed in terms of the other entries in the row using~\eqref{rowsum}. Thus, the eigensteps of a FUNTF consisting of $N$ vectors in $\C^d$ can be parameterized by $(N-d-1)(d-1)$ free variables which we call the \emph{independent eigensteps} (cf.~\cite{hagaPolytopesEigenstepsFinite2016}). Let $\overline{\mathcal{P}}_{d,N}$ be the \emph{eigenstep polytope}; that is, the polytope of independent eigensteps.
 
\begin{example}
\label{35example}
Let $d = 3$ and $N = 5$. The eigenstep table of a FUNTF in $\F^{3,5}$ is of the following form:
\begin{center}
        \begin{tikzcd}[column sep=tiny, row sep=tiny]
        5/3 & 5/3 &  5/3 \\
        5/3 &  5/3 &  2/3 \\
        5/3 & x_1 & 3 - 5/3 - x_1\\
        x_2 & 2 - x_2 & 0 \\
        1 & 0 &  0
    \end{tikzcd}
\end{center}
Notice that there are $(5-3-1)(2-1) = 2$ free variables. All entries are nonnegative and we have the following inequalities: $5/3 \geq x_1 \geq 3 - 5/3 - x_1 \geq 0$ and  $x_2 \geq 2 -x_2\geq 0$ from nonincreasing rows, $x_2 \geq x_1 \geq 2/3$ and $1 \geq 2 - x_2 \geq 3 - 5/3 -x_1$ from nonincreasing upward diagonals, and $5/3 \geq x_2 \geq 1$, $5/3 \geq x_1 \geq 2 -x_2$, and $2/3 \geq 3 - 5/3 -x_1$ from the nonincreasing columns. The corresponding eigensteop polytope is shown in \cref{fig:53polytope}.
\begin{figure}[h]
	\centering
		\includegraphics[height=2in]{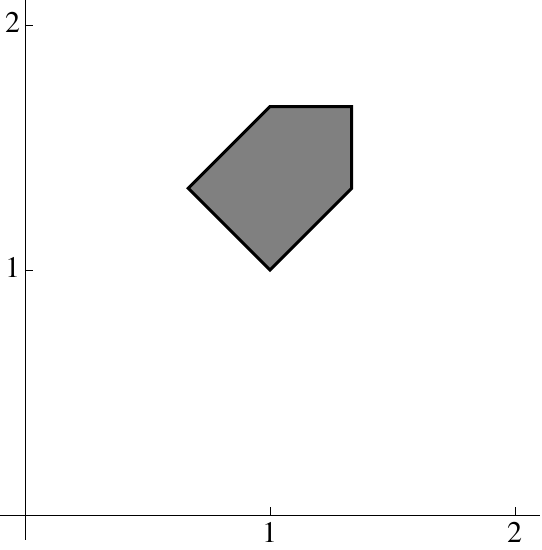}
	\caption{The eigenstep polytope $\overline{\mathcal{P}}_{3,5}$; each point $(x_1,x_2)$ corresponds to a valid way of filling in the eigenstep table for FUNTFs in $\F^{3,5}$.}
	\label{fig:53polytope}
\end{figure}
\end{example}
We have established that the collection of eigensteps corresponding to a FUNTF are highly structured and involve many relations. Nonetheless, there are FUNTFs in $\F^{d,N}$ for any $N \geq d$~\cite{app2,zimmermannNormalizedTightFrames2001a} (see also~\cite{benedettoFiniteNormalizedTight2003,casa}).

To conclude our frame theory summary, we state two results upon which this work is based. The first result comes from a paper by Needham and Shonkwiler \cite{shonk}, in which they prove that a uniform sample from the space of FUNTFs will be full-spark with probability 1.
\begin{theorem}
\label{full-spark}
    For any $N \geq d \geq 1$, the full-spark frames have full measure inside the space of FUNTFs in $\C^d$. 
\end{theorem}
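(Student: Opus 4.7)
The plan is to show that the non-full-spark locus is a proper real-algebraic subset of $\fun$ and hence of measure zero. The FUNTF space is carved out of $\C^{d\times N}$ by the polynomial equations $\|f_i\|^2 = 1$ for $i=1,\dots,N$ together with $FF^{\starr} = (N/d)\mathbb{I}_d$, so it is a real-algebraic variety. A frame $F$ fails to be full-spark precisely when the real polynomial
\[
P(F) \;=\; \prod_{\substack{I\subset\{1,\dots,N\} \\ |I|=d}} \bigl|\det F_I\bigr|^2
\]
vanishes at $F$, where $F_I$ denotes the $d\times d$ submatrix of $F$ with columns indexed by $I$. Thus the non-full-spark locus inside $\fun$ is the zero set of a single real-polynomial function.

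The key principle I would invoke is that on a connected real-analytic manifold, the zero set of a real-analytic function that is not identically zero has measure zero with respect to any smooth measure. Granting this, the measure-zero claim reduces to two points: (a) $\fun$ carries a natural smooth structure, with any lower-dimensional singular stratum already of measure zero, and (b) $P$ does not vanish identically on $\fun$. Point (a) is consistent with the symplectic structure on $\fun$ developed in \cite{shonk}, and is also known from earlier work on the topology of FUNTF space; point (b) is the substantive step, and amounts to exhibiting a single full-spark FUNTF for each admissible pair $(d,N)$.

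The main obstacle is thus step (b). For $N=d$ it is trivial, since any FUNTF is then an orthonormal basis and hence automatically full-spark. For $N>d$ my plan is to cite an explicit construction from the finite frames literature rather than build one from scratch: selecting $d$ rows of the $N\times N$ discrete Fourier transform matrix and rescaling produces a harmonic FUNTF, and Chebotarev's theorem on DFT minors (for $N$ prime), together with its analogues for general $N$, guarantees that row selections exist for which every $d\times d$ minor is nonzero. Once such a full-spark FUNTF is produced, $P$ is not identically zero on $\fun$, so the non-full-spark locus is a proper real-algebraic subset and therefore has measure zero, completing the argument.
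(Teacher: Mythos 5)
The paper itself does not prove \cref{full-spark}; it imports it from Needham--Shonkwiler \cite{shonk}, whose argument goes through the eigenstep/toric structure rather than algebraic geometry. Your algebraic route is reasonable in outline, but as written it has a genuine gap at the final inference. From the existence of a single full-spark FUNTF you conclude that the non-full-spark locus is ``a proper real-algebraic subset and therefore has measure zero.'' That implication is false for a general real-algebraic set: a proper algebraic subset of a \emph{reducible} variety can contain an entire top-dimensional irreducible component and hence have positive measure. Equivalently, the real-analytic identity principle you invoke requires connectedness, and exhibiting one full-spark frame only rules out identical vanishing of $P$ on the connected component (or irreducible component) containing that frame. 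What is missing is the connectivity of $\fun$ --- equivalently, irreducibility of the FUNTF variety --- which is itself a nontrivial theorem (due to Cahill, Mixon, and Strawn, and reproved symplectically in the very reference \cite{shonk} that the paper cites for \cref{full-spark}). With that input your argument does close: if $\fun$ is irreducible of dimension $m$, then $\{P=0\}\cap\fun$ is either all of $\fun$ or an algebraic subset of dimension strictly less than $m$, and the latter has $m$-dimensional Hausdorff measure zero; this also disposes of the singular stratum in your point (a).

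Two smaller remarks. First, for step (b) you do not need Chebotarev's theorem or its analogues: taking the first $d$ rows of the $N\times N$ DFT matrix and rescaling the columns to unit norm yields a FUNTF whose every $d\times d$ minor is a Vandermonde determinant in the distinct roots of unity $\omega^{k_1},\dots,\omega^{k_d}$, hence nonzero, for \emph{every} $N\geq d$. Second, once irreducibility is granted, this explicit example is doing all the work, so it deserves to be stated as a lemma rather than a parenthetical citation.
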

\noindent We use the symplectic framework they establish in their proof of this result to derive our algorithm. The second result is an algorithm from the work of Cahill et al. \cite{fickus} for constructing a finite frame with prescribed eigensteps, which we state as applied to FUNTFs.
\begin{theorem}[{Cahill et al.~\cite[Theorem~2]{fickus}}]
    For any doubly indexed nonnegative sequence $\{\{\mu_{k,j}\}_{j = 1}^d \}_{k = 1}^N$ which satisfies
    \begin{enumerate}
        \item $\mu_{k,j} = 0$ for $j = k + 1, \dots, d$ whenever $k < d$.
        \item Weyl's Inequalities \eqref{Weyls}
        \item $\mu_{N,j} = N/d$ for each $j = 1, \dots, d$
        \item $\displaystyle \sum_{j =1}^{\min\{k,d\}} \mu_{k,j} = k$ for each $k = 1, \dots, N$, 
    \end{enumerate}
    every FUNTF $F = \{f_n\}_{n = 1}^N$ whose eigensteps are $\{\{\mu_{k,j}\}_{j = 1}^{d}\}_k^N$ can be constructed by the following process: For each $k = 1, \dots, N$, consider the polynomial 
    \begin{linenomath*}
    \begin{align*}
        p_k(x) := \prod_{j=1}^{d}(x-\mu_{k,j}).
    \end{align*}
    \end{linenomath*}
    Take any $f_1 \in \C^d$ such that $\|f_1\| = 1$. For each $k = 1, \dots, N-1$, choose any $f_{k+1}$ such that 
    \begin{linenomath*}
        \begin{align}
        \|P_{k;\mu} f_{k+1}\|^2 = - \lim_{x \to \mu} \frac{p_{k+1}(x)}{p_k(x)}
        \label{limit}
    \end{align}
    \end{linenomath*}
    for all $\mu \in \{\mu_{k,j}\}_{j=1}^d$, where $P_{k;\mu}$ denotes the orthogonal projection onto the eigenspace $\ker(\mu I - S_k)$ of $S_k = f_1f_1^* + \dots + f_kf_k^*$. The limit in (\ref{limit}) exists and is nonpositive. Conversely, any $F$ constructed in this way has eigensteps $\{\{\mu_{k,j}\}_{j = 1}^{d} \}_{k = 1}^N$.
\end{theorem}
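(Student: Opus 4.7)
The plan is to proceed by induction on $k$, exploiting the fact that each step $S_{k+1} = S_k + f_{k+1}f_{k+1}^*$ is a rank-one perturbation of the previous partial frame operator. By the matrix determinant lemma, the characteristic polynomials satisfy
\[
    p_{k+1}(x) = p_k(x)\bigl(1 - f_{k+1}^*(xI - S_k)^{-1} f_{k+1}\bigr),
\]
and expanding the resolvent through the spectral decomposition of $S_k$ yields the partial-fraction identity
\[
    \frac{p_{k+1}(x)}{p_k(x)} = 1 - \sum_\mu \frac{\|P_{k;\mu} f_{k+1}\|^2}{x - \mu},
\]
where $\mu$ ranges over the distinct eigenvalues of $S_k$. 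This identity is the bridge between the frame vector $f_{k+1}$ and the eigenstep data: taking $-(x-\mu)$ times the ratio and sending $x \to \mu$ extracts $\|P_{k;\mu} f_{k+1}\|^2$ as the negative residue of $p_{k+1}/p_k$ at $\mu$, which is what~\eqref{limit} encodes. The forward direction follows immediately: any FUNTF with the prescribed eigensteps must satisfy this identity at every $k$, so its projections onto the eigenspaces are forced to match~\eqref{limit}, and $F$ is recovered by the construction (with freedom only in the initial choice of $f_1$ and in the choice of representative for $f_{k+1}$ within each eigenspace of $S_k$).

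For the converse, I would argue inductively: assume $f_1, \dots, f_k$ have been chosen so that $S_k$ has the prescribed spectrum $\{\mu_{k,j}\}_j$, and show that some $f_{k+1}$ meeting~\eqref{limit} extends the construction correctly. This decomposes into three substeps. (i)~The prescribed right-hand sides in~\eqref{limit} are non-negative: this is a sign count on $p_{k+1}(\mu_{k,j})/p_k'(\mu_{k,j})$ driven by Weyl's inequalities~\eqref{Weyls}, which place each $\mu_{k+1,i}$ on a specific side of $\mu_{k,j}$ depending on $i$ and $j$, so that the numerator and denominator differ by exactly one sign. (ii)~A unit vector $f_{k+1} \in \C^d$ realizing the prescribed squared projections onto all eigenspaces of $S_k$ simultaneously exists: since those eigenspaces are mutually orthogonal and span $\C^d$, this reduces to the projections being non-negative and summing to $1$, and the sum-to-one condition follows by comparing the $x^{d-1}$ coefficients of $p_{k+1}$ and $p_k$, which differ by exactly $-1$ thanks to the row-sum condition~(4). (iii)~The resulting $S_{k+1}$ has the prescribed spectrum: reversing the partial-fraction identity, the projections together with $p_k$ uniquely determine the characteristic polynomial of $S_k + f_{k+1}f_{k+1}^*$, which therefore equals $p_{k+1}$.

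The main obstacle is handling the degenerate cases in which some eigenvalues of $S_k$ have multiplicity greater than one or in which some $\mu_{k+1,j}$ coincides with a $\mu_{k,i}$. In these regimes, Weyl's inequalities force multiple cancellations in $p_{k+1}/p_k$ that must be tracked carefully so that the limit in~\eqref{limit} remains well-defined and non-positive and so that the inductive step admits a suitable $f_{k+1}$ with the correct multiplicity pattern on the eigenspaces of $S_k$. Verifying this uniformly across all degenerate configurations, essentially via a secondary induction on multiplicities, is the most technical part of the argument. The remaining global properties $\|f_n\| = 1$ for all $n$ and $S_N = (N/d)I$ are enforced by conditions~(4) and~(3), confirming that the output of the construction is indeed a FUNTF.
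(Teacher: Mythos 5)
The paper does not actually prove this theorem: it is imported verbatim from Cahill et al.\ \cite{fickus}, so there is no internal proof to compare against. Your sketch reconstructs essentially the argument of that reference. The backbone is right: the matrix determinant lemma for the rank-one update $S_{k+1}=S_k+f_{k+1}f_{k+1}^*$ gives $p_{k+1}(x)=p_k(x)\bigl(1-f_{k+1}^*(xI-S_k)^{-1}f_{k+1}\bigr)$, the spectral decomposition turns this into the partial-fraction identity $p_{k+1}(x)/p_k(x)=1-\sum_\mu \|P_{k;\mu}f_{k+1}\|^2/(x-\mu)$, the forward direction then reads off the projections as negative residues, and your three substeps for the converse (nonnegativity of the residues from the interlacing inequalities, realizability of a unit vector with prescribed squared projections because they sum to $1$ by the row-sum condition, and recovery of $p_{k+1}$ as $\det(xI-S_{k+1})$ by reversing the identity) are exactly the right ones. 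Two remarks. First, your reading of the right-hand side of \eqref{limit} as the negative \emph{residue}, i.e.\ $-\lim_{x\to\mu}(x-\mu)\,p_{k+1}(x)/p_k(x)$, is the correct one and matches Cahill et al.; as literally printed above the limit lacks the factor $(x-\mu)$ and would generically diverge, so your proof silently corrects a typo in the statement. Second, the multiplicity bookkeeping you defer is where the real work lies: one must verify that Weyl's inequalities force $p_{k+1}$ to vanish at each eigenvalue $\mu$ of $S_k$ to order at least $m_\mu-1$ (where $m_\mu$ is its multiplicity in $p_k$), so that the reduced fraction has only simple poles and the partial-fraction decomposition you invoke is legitimate, and one must do the sign count on $p_{k+1}(\mu)/p_k^{(m_\mu)}(\mu)$ in that generality. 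This is a finite, if fiddly, verification and your plan for it is sound, so I see no conceptual gap---only the acknowledged unfinished bookkeeping.
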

\subsection{Concepts From Symplectic Geometry}
 In this work, we equip the space of FUNTFs with a symplectic structure, which is fundamental to our algorithm. Our method centers on inverting the momentum map of a Hamiltonian toric action on the symplectic manifold of FUNTFs. Below is a brief introduction to relevant concepts from symplectic geometry; see \cite{McDuff} for a comprehensive introduction to symplectic geometry.
 
A \emph{symplectic manifold} $(M,\omega)$ is a smooth (real) manifold $M$ endowed with a nondegenerate closed $2$-form $\omega$, called a \emph{symplectic form}. For $p \in M$ and $X,Y \in T_pM$, we use $\omega_p(X,Y) \in \R$ to denote the evaluation of $\omega$ on the tangent vectors $X$ and $Y$. 

Nondegeneracy means that at each point $p \in M$, the bilinear form $\omega_p$ is nondegenerate; that is, if $\omega_p(X,Y) = 0$ for every $Y \in T_pM$, then $X = 0$. One consequence of nondegeneracy is that that $M$ must be even-dimensional. 

The requirement that $\omega$ be closed means that $d\omega = 0$, where $d$ is the exterior derivative on forms.

\begin{example}
     The prototypical example of a symplectic manifold is $n$-dimensional complex space $\C^n \approx \R^{2n}$ with the \emph{standard symplectic form} $\omega$. Using coordinates $(x_1 + y_1\sqrt{-1}, \dots, x_n + y_n\i)$ with $x_i,y_i \in \R$, $\omega$ is defined as
\begin{linenomath*}
    \begin{align*}
        \omega = dx_1 \wedge dy_1 + \cdots + dx_n \wedge dy_n.
    \end{align*}
\end{linenomath*}
For $p \in \C^n$ and  $Z = (z_1, \dots, z_n), W = (w_1, \dots, w_n) \in \C^n \approx T_p\C^n$, $\omega$ can be written in complex coordinates as 
\begin{linenomath*}
    \begin{align*}
        \omega_p(Z,W) = -\operatorname{Im}(\overline{w}_1z_1 + \cdots + \overline{w}_nz_n) = -\operatorname{Im}(W^*Z)
    \end{align*}
\end{linenomath*}
where $\operatorname{Im}$ denotes the imaginary part of a complex number. 

We can extend the standard symplectic form on $\C^n$ to a symplectic form on the manifold $\C^{d\times n}$ of $d \times N$ complex matrices by simply composing the standard symplectic form with the reshaping operator $\C^{d \times N} \to \C^{dN}$. In particular, for $F \in \C^{d \times N}$ and $X,Y \in \C^{d\times N} \approx T_F\C^{d \times N}$
\begin{linenomath*}
    \begin{align*}
        \omega_F(X,Y)  = - \operatorname{Im}(\operatorname{tr}(Y^* X)). 
    \end{align*}
\end{linenomath*}
Recall, $\framespace$ is an open subset of $\C^{d \times N}$ so that $\omega$ restricts to $\framespace$ which makes the space of $d \times N$ frames a symplectic manifold. 
\label{prototype}
\end{example}

As in so many areas of geometry, we are often particularly interested in symplectic manifolds with symmetries; that is, those symplectic manifolds admitting a smooth action of a Lie group which preserves the symplectic form in an appropriate sense. In this setting, the natural actions are \emph{Hamiltonian actions}, which we now describe. To build intuition, we start by considering a smooth action $\psi : \R \to \operatorname{Diff}(M)$ of the Lie group $\R$ on a symplectic manifold $(M, \omega)$. We use the notation $\psi_t$ to indicate the diffeomorphism $\psi(t) \in \operatorname{Diff}(M)$. The action induces a vector field $X_\psi$ on $M$ which is given at a point $p \in M$ by
\begin{linenomath*}
    \begin{align*}
        (X_\psi)_p = \frac{d}{dt}\bigg|_{t = 0} \psi_t(p).
    \end{align*}
\end{linenomath*}
If the vector field $X_\psi$ is related to the differential of a smooth function $H : M \to \R$ by
\begin{linenomath*}
    \begin{align}
    \omega(X_{\psi}, \hspace{1pt}\cdot\hspace{1pt}) = dH,
    \label{hamilton-raise-index}
    \end{align}
\end{linenomath*}
then $X_\psi$ is called a \emph{Hamiltonian vector field} and $\psi$ a \emph{Hamiltonian $\R$-action.} The relation \eqref{hamilton-raise-index} leads to many desirable consequences. For example, the flow of $X_\psi$---which, by definition, is $\psi$ itself---preserves the symplectic form and, the Lie derivative of $H$ with respect to $X_\psi$ is zero. Together, these imply that $H$ is constant on orbits of $\psi$. 

Now we consider the general case of a smooth action $\psi : G \to \operatorname{Diff}(M, \omega)$ where $G$ is any Lie group with Lie algebra $\g$ and $(M,\omega)$ is a symplectic manifold. We say $\psi$ is a \emph{Hamiltonian $G$-action} if there exists a map $\Phi: M \to \g^*$ (analogous to the function $H$ in the $G = \R$ case above) such that for any $\xi \in \g$, the function $\Phi^\xi:M \to \R$ defined $\Phi^\xi(p) := \Phi(p)(\xi)$ and the vector field $X^\xi$ defined by
\begin{linenomath*}
    \begin{align*}
    X^\xi_p = \frac{d}{dt}\bigg|_{t = 0} (\exp t \xi) \cdot p 
\end{align*}
\end{linenomath*}
are such that $\omega(X^\xi, \hspace{1pt} \cdot \hspace{1pt}) = d\Phi^\xi$, where $\exp: \g \to G$ is the exponential map. Moreover, we require $\Phi$ to be equivariant with respect to the $G$-action on $M$ and the coadjoint action of $G$ on $\g^*$, meaning 
\begin{linenomath*}
    \begin{align}
        \Ad^*_g(\Phi(p)) = \Phi (g \cdot p).
        \label{equivariance}
    \end{align}
\end{linenomath*}
We call $\Phi$ the \emph{momentum map} of the Hamiltonian $G$-action. In essence, a $G$-action $\psi$ is Hamiltonian if the restriction of $\psi$ to any 1-parameter subgroup results in a Hamiltonian $\R$-action on $(M,\omega)$. This becomes clear upon noticing that the quantifier ``for all $\xi \in \g$'' above parameterizes all 1-parameter subgroups of $G$. The momentum map provides the associated Hamiltonian function $\Phi^\xi$ and vector field $X^\xi$. The importance of equivariance will become apparent below. 

Momentum maps of Hamiltonian actions are the means by which we obtain coordinates on the space of FUNTFs that are integral to our sampling scheme. Moreover, momentum maps give rise to symplectic quotients, as first shown by Meyer~\cite{meyerSymmetriesIntegralsMechanics1973} and Marsden and Weinstein~\cite{marsdenReductionSymplecticManifolds1974}:
\begin{theorem}[Marsden–Weinstein–Meyer Theorem for Regular Values]
\label{MW}
    Let $(M, \omega)$ be a symplectic manifold with a Hamiltonian $G$-action, let $\Phi: M \to \g^*$ be a momentum map for the action, and let $\chi \in \g^*$ be a regular value of $\Phi$ such that $G$ acts freely on the level set $\Phi^{-1}(\chi)$. Then the manifold
    \begin{linenomath*}
        \begin{align*}
            M\sslash_{\chi} G := \Phi^{-1}(\chi)/G,
        \end{align*}
    \end{linenomath*}
    called the \emph{symplectic reduction} or \emph{symplectic quotient} over $\chi$, admits a symplectic structure $\omega_{\text{red}}$ which is uniquely characterized by the equation
    \begin{linenomath*}
        \begin{align*}
            q^*\omega_{\text{red}} = \iota^*\omega,
        \end{align*}
    \end{linenomath*}
    where $q: \Phi^{-1}(\chi) \to M \sslash_{\chi} G$ is the quotient map and $\iota : \Phi^{-1}(\chi) \to M$ is the inclusion map.
    \label{regularvalue}
\end{theorem}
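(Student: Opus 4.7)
The plan is to first establish the smooth-manifold structures involved, then construct and verify the reduced symplectic form. Since $\chi$ is a regular value of $\Phi$, the regular value theorem gives that $\Phi^{-1}(\chi) \subset M$ is a smooth submanifold of codimension $\dim G$ with $T_p \Phi^{-1}(\chi) = \ker d\Phi_p$ at each $p$. The equivariance property \eqref{equivariance} guarantees that $G$ preserves $\Phi^{-1}(\chi)$, and under the freeness hypothesis (together with properness, which is automatic for the compact groups that arise in the later sections) the quotient manifold theorem shows $M \sslash_\chi G$ is a smooth manifold and $q : \Phi^{-1}(\chi) \to M \sslash_\chi G$ is a smooth principal $G$-bundle.

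The central step is a linear-algebraic computation at each $p \in \Phi^{-1}(\chi)$. The defining relation $d\Phi^\xi_p(v) = \omega_p(X^\xi_p, v)$ identifies $\ker d\Phi_p$ with $(\g \cdot p)^\omega$, where $\g \cdot p := \{X^\xi_p : \xi \in \g\}$ is the tangent space to the $G$-orbit through $p$ and $(\cdot)^\omega$ denotes the symplectic orthogonal in $T_pM$. Thus $T_p \Phi^{-1}(\chi) = (\g \cdot p)^\omega$, and since equivariance implies $G \cdot p \subset \Phi^{-1}(\chi)$ we also have $\g \cdot p \subset T_p \Phi^{-1}(\chi)$; that is, $T_p \Phi^{-1}(\chi)$ is coisotropic with $(T_p \Phi^{-1}(\chi))^\omega = \g \cdot p$. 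In particular, the radical of the pulled-back form $\iota^*\omega$ at $p$ is exactly the vertical tangent space $\g \cdot p$ of the bundle $q$.

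I would then define $\omega_{\text{red}}$ pointwise by $(\omega_{\text{red}})_{[p]}([v],[w]) := \omega_p(\tilde{v}, \tilde{w})$ for any lifts $\tilde{v}, \tilde{w} \in T_p \Phi^{-1}(\chi)$ of $[v], [w] \in T_{[p]} M \sslash_\chi G$. Independence of the chosen lifts is immediate from the radical calculation, and independence of the representative $p$ within its orbit follows from $G$-invariance of $\omega$, which in turn follows from the Hamiltonian condition (each $\iota_{X^\xi}\omega = d\Phi^\xi$ is exact, so $\mathcal{L}_{X^\xi}\omega = 0$ by Cartan's formula). Nondegeneracy of $\omega_{\text{red}}$ then drops out of the same radical calculation, and closedness follows because $q^* d\omega_{\text{red}} = d q^*\omega_{\text{red}} = d\iota^*\omega = \iota^* d\omega = 0$ together with injectivity of $q^*$ on forms (a surjective submersion has injective pullback). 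Uniqueness follows from the same injectivity applied to the characterizing equation. The main obstacle I expect is precisely the well-definedness step: it requires carefully interweaving the coisotropic structure of the level set, the freeness of the $G$-action, and the equivariance of $\Phi$. Once that linear-algebraic core is in place, smoothness of $\omega_{\text{red}}$ follows from a local argument using slices for the $G$-action, and the remaining properties are formal.
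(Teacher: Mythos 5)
The paper does not prove this statement; it is quoted as a classical result and attributed to Meyer and to Marsden--Weinstein, so there is no in-paper argument to compare against. Your outline is the standard and correct proof of symplectic reduction at a regular value: the identification $T_p\Phi^{-1}(\chi)=\ker d\Phi_p=(\g\cdot p)^{\omega}$, the observation that the level set is coisotropic with radical exactly the orbit directions $\g\cdot p$, the pointwise definition of $\omega_{\text{red}}$ on the quotient, and the use of injectivity of $q^*$ (for the surjective submersion $q$) to get closedness and uniqueness are all exactly the ingredients of the textbook proof.

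Two small points to tighten. First, equivariance alone does not give that $G$ preserves $\Phi^{-1}(\chi)$: it gives $\Phi(g\cdot p)=\Ad_g^*\chi$, so the level set is $G$-invariant only because $\chi$ is a fixed point of the coadjoint action --- a condition that is forced by (indeed, equivalent to the content of) the hypothesis that $G$ acts on the level set, as the paper itself remarks immediately after the theorem; for general $\chi$ one must either reduce by the stabilizer $G_\chi$ or pass to the coadjoint-orbit version (\cref{regularvalue2}). Second, Cartan's formula only shows that the identity component of $G$ preserves $\omega$; for possibly disconnected $G$ one should note that symplecticity of the action is already part of the definition (the paper takes $\psi: G \to \operatorname{Diff}(M,\omega)$). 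Neither issue affects the applications in the paper, where the groups are connected and the relevant $\chi$ are coadjoint fixed points.
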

Notice that, by the equivariance of the momentum map \eqref{equivariance} and the fact that $G$ acts on $\Phi^{-1}(\chi)$, it follows that $\chi$ is necessarily a fixed point in the coadjoint action of $G$ on $\g^*$. Specifically, for every $g \in G$ we have 
\begin{center}
\begin{tikzcd}[column sep=small]
\Phi^{-1}(\chi) \subset M \arrow[dd,maps to,"\Phi"'] \arrow[rr,maps to, "g \cdot (\hspace{2pt})"] &  & g\cdot \Phi^{-1}(\chi) \subset \Phi^{-1}(\chi) \arrow[dd,maps to, "\Phi"] \\
                                                &  &         \\
\{\chi\} \subset \mathfrak{g}^* \arrow[rr,maps to,"\Ad_g^*"']      &  & \Ad^*_g(\{\chi\}) = \{\chi\}                              
\end{tikzcd}
\end{center}

\noindent This diagram also illustrates the converse: if $\chi \in \g^*$ is a fixed point of the coadjoint action of $G$ on $\g^*$ then the level set $\Phi^{-1}(\chi)$ is $G$-invariant. 

More generally, the equivariance of $\Phi$ implies that $G$ acts on the preimage of any coadjoint orbit,
\begin{linenomath*}
    \begin{align*}
        \mathcal{O}_\chi := \{\xi \in \g^*: \xi = \Ad_g^*\chi \text{ for some } g\in G\}.
    \end{align*}
\end{linenomath*}
In the case that $\chi$ is a regular value of $\Phi$ that is not a fixed point of the coadjoint action, \cref{regularvalue} generalizes to a quotient over the coadjoint orbit $\mathcal{O}_\chi$. 
\begin{theorem}[Marsden–Weinstein–Meyer Theorem for Coadjoint Orbits] 
Let $M, \omega, G,$ and $\Phi$ be as above and $\chi \in \g^*$ be a regular value. If the action of $G$ on $\Phi^{-1}(\mathcal{O}_\chi)$ is free, then the manifold
    \begin{linenomath*}
        \begin{align*}
            M\sslash_{\mathcal{O}_\chi} G := \Phi^{-1}(\mathcal{O}_\chi)/G,
        \end{align*}
    \end{linenomath*}
   admits a symplectic structure $\omega_{\text{red}}$ which is uniquely characterized by the equation
    \begin{linenomath*}
        \begin{align*}
            q^*\omega_{\text{red}} = \iota^*\omega,
        \end{align*}
    \end{linenomath*}
    where $q: \Phi^{-1}(\mathcal{O}_\chi) \to M \sslash_{\mathcal{O}_\chi} G$ is the projection map and $\iota : \Phi^{-1}(\mathcal{O}_\chi) \to M$ is the inclusion map.
    \label{regularvalue2}
\end{theorem}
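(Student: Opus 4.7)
The plan is to reduce Theorem \ref{regularvalue2} to the already-established fixed-point version, Theorem \ref{regularvalue}, via the so-called \emph{shifting trick}. The key enabling fact is that each coadjoint orbit $\mathcal{O}_\chi \subset \g^*$ carries its own canonical symplectic form, the Kirillov--Kostant--Souriau form $\omega_{KKS}$, with respect to which the coadjoint action of $G$ is Hamiltonian with momentum map equal to the inclusion $\mathcal{O}_\chi \hookrightarrow \g^*$.

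First, I would form the product symplectic manifold $(M \times \mathcal{O}_\chi^-, \omega \oplus (-\omega_{KKS}))$, where $\mathcal{O}_\chi^-$ denotes the orbit with the reversed symplectic form. The diagonal $G$-action (by $\psi$ on $M$ and the coadjoint action on the orbit) is Hamiltonian, and a short calculation shows that a momentum map is given by $\Tilde{\Phi}(p, \xi) := \Phi(p) - \xi$. Next, I would check that $0 \in \g^*$ is a regular value of $\Tilde{\Phi}$: at a point $(p, \Phi(p)) \in \Tilde{\Phi}^{-1}(0)$, the assumption that $\chi$ is a regular value of $\Phi$, combined with the fact that $T_{\Phi(p)}\mathcal{O}_\chi$ lies in the image of $d\Phi_p$ by equivariance, forces $d\Tilde{\Phi}$ to be surjective. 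Moreover, $\Tilde{\Phi}^{-1}(0) = \{(p, \Phi(p)) : p \in \Phi^{-1}(\mathcal{O}_\chi)\}$, and projection onto the first factor gives a $G$-equivariant diffeomorphism $\Tilde{\Phi}^{-1}(0) \xrightarrow{\sim} \Phi^{-1}(\mathcal{O}_\chi)$. Since $G$ acts freely on $\Phi^{-1}(\mathcal{O}_\chi)$ by hypothesis, it acts freely on $\Tilde{\Phi}^{-1}(0)$, so Theorem \ref{regularvalue} yields a unique reduced symplectic form $\Tilde{\omega}_{\text{red}}$ on $(M \times \mathcal{O}_\chi^-)\sslash_0 G$. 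Transferring $\Tilde{\omega}_{\text{red}}$ through the induced diffeomorphism between quotients produces the desired $\omega_{\text{red}}$ on $\Phi^{-1}(\mathcal{O}_\chi)/G$.

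The final step is to verify the uniqueness characterization $q^*\omega_{\text{red}} = \iota^*\omega$. Applying the characterization from Theorem \ref{regularvalue}, the reduced form pulls back along $\Tilde{\iota}: \Tilde{\Phi}^{-1}(0) \hookrightarrow M \times \mathcal{O}_\chi^-$ to $\iota^*\omega - s^*\omega_{KKS}$, where $s: \Phi^{-1}(\mathcal{O}_\chi) \to \mathcal{O}_\chi$ is the restriction of $\Phi$. The main obstacle is showing that the $s^*\omega_{KKS}$ term contributes nothing to the form induced on $\Phi^{-1}(\mathcal{O}_\chi)/G$. This comes down to a classical identity: using the definition of $\omega_{KKS}$ in terms of the coadjoint action together with the equivariance \eqref{equivariance} of $\Phi$, one checks that for any $\xi \in \g$ the fundamental vector field $X^\xi$ on $M$ and its image under $ds$ are related so that $s^*\omega_{KKS}(X^\xi, \,\cdot\,)$ agrees with $\iota^*\omega(X^\xi, \,\cdot\,)$ on the vertical subspace of $\Phi^{-1}(\mathcal{O}_\chi) \to \Phi^{-1}(\mathcal{O}_\chi)/G$. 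This cancellation shows that the form $\iota^*\omega - s^*\omega_{KKS}$ descends to exactly the same form on the quotient as $\iota^*\omega$, establishing the characterization. Everything else is a formal consequence of Theorem \ref{regularvalue} and the shifting construction.
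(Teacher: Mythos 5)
The paper never actually proves this statement---it is quoted as a classical background result---so there is no internal proof to compare against; I will assess your argument on its own terms. Your shifting-trick reduction to \cref{regularvalue} is the standard and essentially correct way to \emph{construct} the reduced space: the product $(M\times\mathcal{O}_\chi^-,\ \omega\oplus(-\omega_{KKS}))$, the momentum map $\Tilde{\Phi}(p,\xi)=\Phi(p)-\xi$, the graph identification $\Tilde{\Phi}^{-1}(0)\cong\Phi^{-1}(\mathcal{O}_\chi)$, and the regularity and freeness checks all go through as you describe.

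The gap is in your final step. What the shifting trick actually delivers is $q^*\omega_{\text{red}}=\iota^*\omega-s^*\omega_{KKS}$, and the correction term cannot be discarded. For $\xi,\eta\in\g$ one computes $\iota^*\omega(X^\xi,X^\eta)=\langle\Phi(p),[\xi,\eta]\rangle$, which is nonzero in general when $\mathcal{O}_\chi$ is not a single point; hence $\iota^*\omega$ is not horizontal for the $G$-action on $\Phi^{-1}(\mathcal{O}_\chi)$, and \emph{no} $2$-form on the quotient can pull back to it under the submersion $q$. The computation you invoke---that $s^*\omega_{KKS}(X^\xi,\cdot)$ and $\iota^*\omega(X^\xi,\cdot)$ agree---is precisely the verification that $\iota^*\omega-s^*\omega_{KKS}$ is horizontal, hence basic and able to descend; it does not show that $s^*\omega_{KKS}$ ``contributes nothing,'' and the assertion that $\iota^*\omega$ and $\iota^*\omega-s^*\omega_{KKS}$ descend to the same form is vacuous for the former, which does not descend at all. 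The correct uniquely-characterizing identity for orbit reduction (Marle, Kazhdan--Kostant--Sternberg) is $q^*\omega_{\text{red}}=\iota^*\omega-s^*\omega_{KKS}$; the version without the KKS term, as printed in the theorem statement, is literally valid only when $\mathcal{O}_\chi$ is a single point (e.g.\ $G$ abelian or $\chi$ a fixed point of the coadjoint action)---which, to be fair, is the only case actually used elsewhere in the paper, so your proof would suffice there, but it does not establish the general statement as written.
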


We state one final useful result on symplectic reduction due to Sjamaar and Lerman~\cite{S-L}, which concerns the symplectic quotient over a singular value $\chi \in \g^*$ of the momentum map. In this more general case, the quotient isn't a manifold, but it still has a natural reduced symplectic structure. In particular, they showed that the symplectic quotient over $\chi$ is a symplectic stratified space, which is a disjoint union of symplectic manifolds that fit together nicely. Remarkably, if the momentum map is proper, then the symplectic stratified space has a unique open, connected, and dense piece called a \emph{stratum} which is a manifold.

\begin{theorem}
    \label{s-k}
    Let $(M, \omega)$ be a symplectic manifold with a Hamiltonian $G$-action, let $\Phi: M \to \g^*$ be a momentum map for the action and let $\chi \in \g^*$. The symplectic reduction 
    \begin{linenomath*}
        \begin{align*}
            M \sslash_{\mathcal{O}_\chi}G := \Phi^{-1}(\mathcal{O}_\chi)/G
        \end{align*}
    \end{linenomath*}
    is a symplectic stratified space with symplectic structure $\omega_{\text{red}}$ characterized by $q^*\omega_{\text{red}} = \iota^*\omega$. If $\Phi$ is proper (for example, if $G$ is compact), then the reduction has a unique open stratum which is a manifold and is connected and dense.
\end{theorem}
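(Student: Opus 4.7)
The plan is to follow the strategy of Sjamaar and Lerman~\cite{S-L}, reducing the singular-value case to many applications of the regular-value statement~\cref{regularvalue2}. First I would stratify $M$ by orbit type: for each conjugacy class $(H)$ of closed subgroups of $G$, let $M_{(H)}$ denote the set of points whose stabilizer under the $G$-action is conjugate to $H$. The slice theorem for proper actions implies that each $M_{(H)}$ is a $G$-invariant, locally closed submanifold of $M$, and the Marle--Guillemin--Sternberg equivariant symplectic normal form shows that the pullback of $\omega$ to $M_{(H)}$ has constant rank with null foliation tangent to the orbits of $N_G(H)$. Consequently $\omega|_{M_{(H)}}$ descends to a symplectic form on $M_{(H)}/G$.

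Next I would observe that $\Phi$ restricts to a momentum map for the residual $N_G(H)/H$-action on $M_{(H)}/G$, and that the coadjoint orbit $\mathcal{O}_\chi$ meets $\Phi(M_{(H)})$ cleanly. Then~\cref{regularvalue2} applies on each stratum to yield a symplectic manifold $(M_{(H)} \cap \Phi^{-1}(\mathcal{O}_\chi))/G$ whose reduced form satisfies $q^*\omega_{\text{red}} = \iota^*\omega$. Taking the disjoint union over all orbit types produces the candidate stratified space $M\sslash_{\mathcal{O}_\chi}G$, with a piecewise-defined symplectic structure.

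The main obstacle is showing that these pieces actually fit together as a stratified space, i.e., that the closure of each stratum is a union of strata (the frontier condition) and that nearby strata are compatibly embedded. This is where the equivariant symplectic slice theorem does the heavy lifting: the local model near a point in a deep stratum is explicit enough to read off how the nearby strata sit inside each other, and one verifies in this local model that the reduced forms on adjacent strata are compatible. Establishing these compatibilities, together with continuity of the stratified symplectic structure across strata, is the delicate technical core of~\cite{S-L}, and is the step I would expect to be hardest to verify cleanly.

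Finally, assume $\Phi$ is proper. Then $\Phi^{-1}(\mathcal{O}_\chi)$ is closed in $M$ and the induced $G$-action on it is proper. The principal orbit theorem for proper actions furnishes a unique principal orbit type $(H_{\text{prin}})$ whose stratum is open, connected, and dense in $\Phi^{-1}(\mathcal{O}_\chi)$; the complement, a countable union of submanifolds of strictly smaller dimension, has empty interior and codimension at least two in each level set. Passing to the $G$-quotient preserves openness, connectedness, and density, yielding the desired unique open, connected, dense stratum of $M\sslash_{\mathcal{O}_\chi}G$, which is a manifold by the construction of the previous paragraphs.
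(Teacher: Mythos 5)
The paper does not actually prove this statement; it is quoted from Sjamaar and Lerman \cite{S-L} and used as a black box, so there is no internal argument to compare yours against. Your sketch is recognizably the Sjamaar--Lerman strategy (orbit-type stratification, Marle--Guillemin--Sternberg local normal form, frontier condition), but two of your concrete steps are wrong as stated. The claim that $\omega$ restricted to an orbit-type stratum $M_{(H)}$ has null foliation tangent to the group orbits and hence descends to a symplectic form on $M_{(H)}/G$ is false: take $G = U(1)$ acting on $M=\C^n$ by scalar multiplication, so the principal stratum is $\C^n\setminus\{0\}$, which is open; there $\omega$ is nondegenerate (null foliation zero, not the orbit directions), and the quotient has odd real dimension $2n-1$, so it admits no symplectic form. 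The correct degeneracy statement in \cite{S-L} concerns $\omega$ restricted to $\Phi^{-1}(\mathcal{O}_\chi)\cap M_{(H)}$ --- one first shows via the local normal form that this intersection is a manifold, then that the null distribution of the restricted form is exactly the tangent space to the $G$-orbits, and only then does one get a symplectic form on $\bigl(\Phi^{-1}(\mathcal{O}_\chi)\cap M_{(H)}\bigr)/G$. For the same reason you cannot simply ``apply \cref{regularvalue2} on each stratum'': the whole content of the theorem is to replace the regular-value hypothesis by this manifold-plus-constant-rank analysis on each piece of the level set.

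The second gap is in your final paragraph. The principal orbit theorem applies to proper actions on (connected) manifolds, but $\Phi^{-1}(\mathcal{O}_\chi)$ is not a manifold precisely in the singular case the theorem is designed for, so it cannot be invoked to produce a unique open, connected, dense principal stratum of the level set. The existence, uniqueness, connectedness, and density of the open stratum of the quotient is one of the genuinely hard results of \cite{S-L}: it is established by showing, in the local model, that the reduced space is locally a cone whose top stratum is connected and dense, and then globalizing; properness of $\Phi$ enters to guarantee local compactness of the quotient and to run the connectivity argument. As written, this step of your proposal is unsupported rather than merely compressed.
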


\section{Symplectic Structure On FUNTFs}\label{sec:symplectic structures on FUNTF space}

In \Cref{prototype} we noted that the space  $\framespace$ of frames  consisting of $N$ vectors in $\C^d$ is a symplectic manifold. Its symplectic structure is obtained by pulling back the standard symplectic form on $\C^{d \times N}$ via the inclusion map $\framespace \hookrightarrow \C^{d,N}$. Let $\fun \subset \framespace$ denote the set of unit norm tight frames in $\framespace$. This notation aligns with that of \cite{shonk}, where the subscript $\boldsymbol{N/d}$ specifies that frames in this space have frame operator spectrum $(N/d, \dots, N/d) \in \R^d$, and the $\boldsymbol{1}$ in parentheses indicates that each frame vector has squared norm equal to 1. To obtain a natural symplectic structure on FUNTFs, we pass to the quotient of $\fun$ by a certain group of symmetries. 

We derive the desired symplectic structure in two steps. First, we construct the space of $N/d$-tight frames within $\framespace$ as a symplectic reduction. Next, we obtain the space of unit norm tight frames as the symplectic reduction of the $N/d$-tight frames from step 1. From this point onward, we denote the space of $N/d$-tight frames by $\tight$. 
\subsection{Symplectic Structure on \texorpdfstring{$N/d$}{N/d}-Tight Frames}
We will show that $\tight$ is the preimage of a regular value in $\u(d)^*$ under the momentum map of the Hamiltonian $U(d)$-action on $\C^{d \times N}$ given by left multiplication. It then follows from \cref{MW} that $\tight/U(d)$ is a symplectic quotient and thus inherits a natural symplectic structure. 

We begin by identifying $\u(d)^*$ with the space of $d\times d$ Hermitian matrices, $\H(d)$, via the isomorphism $\alpha : \H(d) \to \u(d)^*$ which maps a matrix $\xi \in \H(d)$ to the linear functional 
\begin{linenomath*}
    \begin{align}\label{alpha}
        \alpha_\xi : \u(d) & \to \R\\
        \eta & \mapsto \frac{\i}{2} \tr(\eta^*\xi) .\nonumber
    \end{align}
\end{linenomath*}
Notice that $\alpha$ is simply the map $\u(d) \to \H(d)$ given by $\eta \mapsto -\i/2 \eta$ followed by an application of the Riesz representation theorem with respect to the standard Frobenius inner product on $\u(d)$. Next, we define an action of $U(d)$ on $\H(d)$, referred to as the adjoint action, given by conjugation: 
\begin{linenomath*}
    \begin{align*}
        \Ad: U(d) \times \H(d) & \to \H(d)\\
        (A,\xi) & \mapsto A \xi A^*.
    \end{align*}
\end{linenomath*}
The action $\Ad$ and the isomorphism $\alpha$ are compatible in the following sense:
\begin{lemma}
    \label{equivarient identification}
    The isomorphism $\alpha$ is equivariant with respect to adjoint action on $\H(d)$ and the coadjoint action on $\u(d)^*$, i.e. for each $A \in U(d)$ the following commutes
    \begin{equation}
        \begin{tikzcd}
            \H(d) \arrow[r,"\Ad_A"] \arrow[d, "\alpha"']&  \H(d) \arrow[d,"\alpha"]\\
            \u(d)^* \arrow[r,"\Ad_A^*"] &  \u(d)^*
        \end{tikzcd}
        \label{equivarience of alpha}
    \end{equation}
    It follows that, under $\alpha$, the coadjoint orbit of any element in $\u(d)^*$ is identified with the collection of Hermitian matrices with a fixed spectrum.
\end{lemma}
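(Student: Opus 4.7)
The plan is to verify commutativity of the diagram \eqref{equivarience of alpha} by direct calculation, evaluating both compositions on an arbitrary $\eta \in \u(d)$. Fix $A \in U(d)$ and $\xi \in \H(d)$. Going clockwise, the definition of $\alpha$ gives
\[
    \alpha(\Ad_A \xi)(\eta) \;=\; \alpha(A\xi A^*)(\eta) \;=\; \tfrac{\i}{2}\tr(\eta^* A\xi A^*).
\]
Going counter-clockwise, I would unwind the definition of the coadjoint action as $(\Ad_A^*\phi)(\eta) = \phi(\Ad_{A^{-1}}\eta)$, which gives
\[
    (\Ad_A^*\,\alpha_\xi)(\eta) \;=\; \alpha_\xi(A^{-1}\eta A) \;=\; \tfrac{\i}{2}\tr\!\bigl((A^{-1}\eta A)^* \xi\bigr).
\]
Using $A^{-1} = A^*$ (since $A$ is unitary) together with cyclic invariance of trace should collapse both expressions to the same scalar, confirming the diagram commutes.

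The concluding observation then follows from the spectral theorem for Hermitian matrices. Every element of $\H(d)$ is unitarily diagonalizable, so two Hermitian matrices lie in the same orbit of the adjoint (conjugation) action of $U(d)$ on $\H(d)$ if and only if they have the same multiset of eigenvalues. The equivariance of $\alpha$ established above then transports this classification of adjoint orbits in $\H(d)$ directly to the coadjoint orbits in $\u(d)^*$.

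I do not anticipate a real obstacle; the argument is essentially bookkeeping. The only care needed is to pin down the convention for the coadjoint action at the outset and to keep track of conjugates versus inverses when applying $A^{-1}=A^*$ inside the trace.
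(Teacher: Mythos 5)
Your proposal is correct and follows essentially the same route as the paper: both verify commutativity by evaluating the two compositions on an arbitrary $\eta \in \u(d)$ and matching them via $A^{-1}=A^*$ and cyclicity of the trace, and both deduce the orbit identification from unitary diagonalizability of Hermitian matrices plus equivariance. The only difference is cosmetic — the paper writes out the intermediate identity $\tr(\eta^* A \xi A^*) = \tr((A^*\eta A)^*\xi)$ explicitly, which you leave as the final collapsing step.
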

    \begin{proof}
        Let $A \in U(d)$ and $\xi \in \H(d)$. We readily check that the diagram \eqref{equivarience of alpha} commutes:
        \begin{linenomath*}
        \begin{align*}
            \alpha_{\Ad_A(\xi)}(\eta) = \frac{\i}{2}\tr(\eta^*A\xi A^*) = \frac{\i}{2}\tr((A^*\eta A)^*\xi) = \alpha_\xi(A^*\eta A) = \Ad_A^*\alpha_\xi(\eta)
        \end{align*}
        \end{linenomath*}
        for any $\eta \in \u(d)$. 
		
		To see the second statement holds, let $\xi \in \H(d)$. Since $\xi$ is Hermitian, it has a purely real spectrum $\lambda$ and is unitarily diagonalizable. Hence, there exists an $A \in U(d)$ for which
        \begin{linenomath*}
            \begin{align*}
                \xi = A (\diag(\lambda)) A^* = \Ad_A(\diag(\lambda)).
            \end{align*}
        \end{linenomath*}
       Denote the adjoint orbit of $\xi$ in $\H(d)$ by $\mathcal{O}_\xi$. Notice that $\mathcal{O}_\xi$ is equal to the orbit of $\diag(\lambda)$ in $\H(d)$, which we denote by $\mathcal{O}_\lambda$, and consists of all $d\times d$ Hermitian matrices whose spectrum is $\lambda$. Then on the level of sets, equivariance gives 
        \begin{linenomath*}
            \begin{align*}
                \alpha(\mathcal{O}_\lambda) =  \alpha(\mathcal{O}_\xi) =  \{\alpha_{\Ad_A\xi} : A \in U(d)\}
                = \{\Ad^*_A \alpha_{\xi} : A \in U(d)\} = \mathcal{O}_{\alpha_\xi}, 
            \end{align*}
        \end{linenomath*}
        where $\mathcal{O}_{\alpha_\xi}$ denotes the coadjoint orbit of $\alpha_{\xi}$ in $\u(d)^*$. Since $\alpha$ is an isomorphism, all coadjoint orbits in $\u(d)^*$ take the form $\mathcal{O}_{\alpha_\xi}$. 
    \end{proof}
With the identification $\alpha$, we can now express the momentum map for the left multiplication action of $U(d)$ on $\mathbb{C}^{d \times N}$ more cleanly. Notably, $-N/d \cdot \mathbb{I}_d \in \mathcal{H}(d)$ is a regular value of the momentum map, and its preimage is precisely $\tight$.
\begin{theorem}
\label{reduced tight frames}
The action of $U(d)$ on $\C^{d\times N}$ by left multiplication is Hamiltonian with momentum map
    \begin{linenomath*}
        \begin{align*}
            \Phi_{U(d)}: \C^{d \times N} &\to \H(d) \approx \u(d)^*\\
            F &\mapsto -FF^*
        \end{align*}
    \end{linenomath*}
The regular values of $\Phi_{U(d)}$ are the negative definite matrices in $\H(d)$. It follows that the space $\tight/U(d)$ is the symplectic quotient 
    \begin{linenomath*}
        \begin{align*}
            \C^{d \times N} \sslash_{-N/d\cdot \mathbb{I}_d}U(d).
        \end{align*}
    \end{linenomath*}
In particular, it has a natural symplectic structure.
\end{theorem}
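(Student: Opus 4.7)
The plan is to verify three things in sequence: (a) that left multiplication by $U(d)$ on $\C^{d\times N}$ is Hamiltonian with momentum map $F \mapsto -FF^*$ under the identification $\alpha$ of~\eqref{alpha}; (b) that the regular values of this momentum map are exactly the negative definite elements of $\H(d)$; and (c) that $-\tfrac{N}{d}\I_d$ is such a regular value whose preimage is $\tight$ and on which $U(d)$ acts freely, so that \Cref{MW} applies directly.

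For (a), I would first note that $\omega_F(X,Y) = -\operatorname{Im}\tr(Y^*X)$ is preserved by left multiplication by any $A \in U(d)$ because $(AY)^*(AX) = Y^*X$. For $\xi \in \u(d)$ the fundamental vector field of the one-parameter subgroup $\exp(t\xi)$ is $X^\xi_F = \xi F$, and using \eqref{alpha} the candidate Hamiltonian is $\Phi^\xi(F) = \alpha_{-FF^*}(\xi) = \tfrac{\i}{2}\tr(\xi FF^*)$ after applying $\xi^* = -\xi$. Differentiating in $F$ gives
\begin{linenomath*}
\begin{align*}
    d\Phi^\xi_F(Y) = \tfrac{\i}{2}\bigl[\tr(\xi YF^*) + \tr(\xi FY^*)\bigr].
\end{align*}
\end{linenomath*}
Using cyclicity together with $\tr(M) = \overline{\tr(M^*)}$ and $\xi^* = -\xi$, the first term inside the bracket equals $-\overline{\tr(Y^*\xi F)}$ and the second equals $\tr(Y^*\xi F)$, so the bracket simplifies to $2\i \operatorname{Im}\tr(Y^*\xi F)$, giving $d\Phi^\xi_F(Y) = -\operatorname{Im}\tr(Y^*\xi F) = \omega_F(X^\xi, Y)$. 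Equivariance is immediate from $\Phi_{U(d)}(AF) = -AFF^*A^* = \Ad_A(-FF^*)$ combined with \Cref{equivarient identification}.

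For (b), the differential at $F$ is $d(\Phi_{U(d)})_F(Y) = -(YF^* + FY^*)$ as a map $\C^{d\times N} \to \H(d)$. If $FF^*$ is invertible, then for any $H \in \H(d)$ the choice $Y = -\tfrac{1}{2} H (FF^*)^{-1} F$ gives $YF^* = -\tfrac{1}{2}H$, and the Hermiticity of $H$ forces $FY^* = (YF^*)^* = -\tfrac{1}{2}H$, so $-(YF^* + FY^*) = H$ and the differential surjects onto $\H(d)$. Conversely, if $F^*v = 0$ for some nonzero $v \in \C^d$, then $v^*(YF^* + FY^*)v = 0$ for every $Y$, so the Hermitian matrix $vv^*$ lies outside the image. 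Thus $F$ is a regular point of $\Phi_{U(d)}$ iff $FF^*$ is invertible iff $-FF^*$ is negative definite, and (since every negative definite matrix arises as $-FF^*$ when $N \geq d$) the regular values of $\Phi_{U(d)}$ are precisely the negative definite Hermitian matrices.

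For (c), $-\tfrac{N}{d}\I_d$ is negative definite and hence a regular value; its preimage under $\Phi_{U(d)}$ is by definition $\tight$. The action of $U(d)$ on $\tight$ is free because any $F \in \tight$ has rank $d$, so $AF = F$ forces $A = \I_d$. Applying \Cref{MW} then yields the claimed reduced symplectic structure on $\tight/U(d)$. The only step with any subtlety is the trace manipulation in (a) that converts the complex expression $\tr(\xi(YF^* + FY^*))$ into the real quantity $-\operatorname{Im}\tr(Y^*\xi F)$; the remainder is essentially linear algebra driven by the defining tightness condition $FF^* = \tfrac{N}{d}\I_d$.
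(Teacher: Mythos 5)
Your proof is correct and follows the same route the paper takes: verify the momentum map condition by direct computation, identify the regular values, check that $\tight$ is the fiber over $-\tfrac{N}{d}\mathbb{I}_d$ and that $U(d)$ acts freely on it, then invoke \Cref{MW}. The only difference is one of completeness: the paper defers the Hamiltonian and regular-value computations to \cite{shonk} and only spells out the fiber identification and freeness, whereas you carry out those computations explicitly (and correctly).
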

A direct computation, as demonstrated in \cite{shonk}, shows that this action is Hamiltonian, with momentum map $\Phi_{U(d)}$ whose regular values are the negative definite matrices in $\mathcal{H}(d)$. Notice that $\Phi_{U(d)}$ maps a frame to the negative of its frame operator, thus the preimage of the regular value $-N/d \cdot \mathbb{I}_d$ is $\tight$. Additionally, $U(d)$ acts on $\tight$: for any $A \in U(d)$ and $F \in \tight$, we have
\begin{linenomath*}
    \begin{align*}
        \Phi_{U(d)}(AF) = -AFF^*A^* = -A (-N/d \cdot \mathbb{I}_{d}) A^* = -N/d \cdot \mathbb{I}_{d}.
    \end{align*}
\end{linenomath*}
Finally, the action on the fiber is free, as $AF = F$ implies $A = \mathbb{I}_d$. 
Thus, by \cref{MW}, the quotient $\tight/U(d)$ is a symplectic manifold consisting of unitary equivalence classes of $N/d$-tight frames.

It will be useful to identify the symplectic manifold $\tight/U(d)$ with a subset in $\H(N)$ that corresponds to a coadjoint orbit in $\u(N)^*$. Specifically, let ${\mathcal{O}}_{N/d} \subset \H(N)$ denote the set of all $N \times N$ Hermitian matrices whose first $d$ eigenvalues (in non-increasing order) are $N/d$, with the remaining $N-d$ eigenvalues all equal to $0$. By \cref{equivarient identification}, the isomorphism $\alpha$ defined in \eqref{alpha} identifies ${\mathcal{O}}_{N/d}$ with a coadjoint orbit $\mathcal{O}_\beta \subset \u(N)^*$ for some $\beta \in \u(N)^*$. Like all coadjoint orbits, $\mathcal{O}_\beta$ carries a natural symplectic structure, which pulls back along $\alpha$ to a symplectic structure on ${\mathcal{O}}_{N/d}$.

Elements of the tangent space to $\mathcal{O}_\beta$ at $\beta \in \u(N)^*$ are of the form $\ad^*_\xi \beta$ for $\xi \in \u(N)$ where $\ad^*$ is the coadjoint representation of $\u(N)$ on $\u(N)^*$ which is obtained as the derivative of the coadjoint action $\Ad^*$ of $U(N)$ on $\u(N)^*$. The natural symplectic form on $\mathcal{O}_\beta$, called the \emph{Kirillov–Kostant–Souriau (KKS) symplectic form}, is given by 
\begin{linenomath*}
    \begin{align*}
        \omega_\beta^{\text{KKS}}(\ad^*_\xi \beta, \ad^*_{\xi'}\beta) = \beta([\xi,\xi']),
    \end{align*}
\end{linenomath*}
where $[\cdot, \cdot]$ is the Lie bracket on $\u(N)$. 

Recall from the proof of \cref{equivarient identification} that ${\mathcal{O}}_{N/d}$ is the orbit of $\diag(N/d, \dots, N/d,0, \dots, 0)$ under the proper and free adjoint action of $U(N)$ on $\H(N)$, hence ${\mathcal{O}}_{N/d}$ is a properly embedded submanifold of $\H(N)$. Consequently, the restriction of the linear isomorphism $\alpha$ to ${\mathcal{O}}_{N/d}$ is a diffeomorphism from ${\mathcal{O}}_{N/d}$ to $\mathcal{O}_\beta$, and thus $\alpha^*\omega^{\text{KKS}}$ is a symplectic form on ${\mathcal{O}}_{N/d}$. 

In \cite{shonk}, Shonkwiler and Needham proved that the space $\tight/U(d)$, equipped with the reduced symplectic structure, is symplectomorphic to the submanifold ${\mathcal{O}}_{N/d} \subset \H(N)$ with the symplectic form $\alpha^*\omega^{\text{KKS}}$ via the mapping
\begin{linenomath*}
    \begin{align}
        [F] & \mapsto F^*F
        \label{Symplectomorphism}
    \end{align}
\end{linenomath*}
which sends a unitary equivalence class to the Gram matrix of any of its representatives. 

\subsection{Second Symplectic Reduction}
The $N$-torus $U(1)^N$ can be identified with the standard maximal torus in $U(N)$, which is the subgroup
\begin{linenomath*}
    \begin{align*}
        \mathbb{T} = \big\{\diag\big(e^{\sqrt{-1}\theta_1}, \dots, e^{\i \theta_N}\big): \theta_i \in [0,2\pi)\big\}.
    \end{align*}
\end{linenomath*}
Under this identification, $U(1)^N \approx \mathbb{T}$ acts on $\C^{d\times N}$ by right multiplication. We will see momentarily that this action preserves the spectra of frame operators, thus it restricts to an action on the submanifold $\tight \subset \C^{d \times N}$. 

Define the $U(1)^N$-action on $\C^{d \times N}$ by $D \cdot F := FD^* = F\overline{D}$ where $F \in \C^{d \times N}$ and $D \in \mathbb{T}$. Right multiplication by the Hermitian adjoint is chosen to ensure that this action aligns with the action of the full unitary group $U(N)$ on $\C^{d\times N}$. In particular, if $A_1, A_2 \in U(N)$ and $F \in \C^{d \times N}$, then 
\begin{linenomath*}
    \begin{align*}
        A_1 \cdot (A_2 \cdot F) = A_1 \cdot FA_2^* = FA_2^*A_1^* = F(A_1A_2)^* = (A_1A_2) \cdot F
    \end{align*}
\end{linenomath*}
showing that $A \cdot F := FA^*$ indeed defines a $U(N)$-action on $\C^{d\times N}$. Without the Hermitian adjoint in the $U(1)^N$-action definition, an extension to a full $U(N)$-action would not be possible, as generic matrices in $U(N)$ do not commute. 

A straightforward computation shows that this action preserves the spectrum of a frame operator. For $D \in \mathbb{T}$ and $F \in \C^{d \times N}$, the frame operator of $D \cdot F = FD^*$ is 
\begin{linenomath*}
\begin{align*}
    (FD^*)(FD^*)^* = FD^*DF^* = FF^*
\end{align*}
\end{linenomath*}
which matches the frame operator of $F$. Thus, the action preserves the frame operator spectrum, and in particular, $U(1)^N$ acts on $\tight$. 

By the associativity of matrix multiplication, the left action of $U(d)$ on $\tight$ commutes with the right action of $\mathbb{T}$ on $\tight$. This implies that the $\mathbb{T}$-action on $\tight$ descends naturally to a well-defined action on the quotient $\tight/U(d)$, given by $D \cdot [F] := [FD^*]$. However, the descended action on $\tight/U(d)$ is not effective. In fact, the isotropy group at any unitary equivalence class $[f]$ contains the 1-parameter subgroup $\{e^{i\theta} \mathbb{I}_N\}_\theta \subset \mathbb{T}$. To see this, let $e^{i \theta} \in U(1)$ and  $ F \in \tight$. Then 
\begin{linenomath*}
\begin{align*}
   e^{i\theta} \mathbb{I}_N \cdot[F] = e^{i \theta} \mathbb{I}_N \cdot [e^{i\theta} \mathbb{I}_d F] = [e^{i\theta} \mathbb{I}_d F e^{-i \theta}\mathbb{I}_N] = [F],
\end{align*}
\end{linenomath*}
showing that $e^{i \theta} \mathbb{I}_N$ fixes the class $[F]$. Although the descended action is Hamiltonian, the fact that the isotropy subgroup at each point contains a 1-parameter subgroup implies that every point is a singular point of the momentum map. We prefer an effective action so that the momentum map of the $\mathbb{T}$-action has regular values and a full-dimensional image. This choice will help avoid complications later on. 

To get an effective action and avoid the aforementioned complications, we modify the descended torus action on $\tight/U(d)$. For this, we recall the identification of $\tight/U(d)$ with the adjoint orbit ${\mathcal{O}}_{N/d} \subset \H(N) \approx \u(N)^*$ as defined in (\ref{Symplectomorphism}). Under this identification, the descended $\mathbb{T}$-action on $\tight/U(d)$ corresponds to the action on ${\mathcal{O}}_{N/d}$ given by 
\begin{linenomath*}
    \begin{align}
    D \cdot (F^*F) = DF^*FD^*,
    \label{torus action on coadjoint orbit}
    \end{align}
\end{linenomath*}
where $D \in \mathbb{T}$ and $F^*F \in {\mathcal{O}}_{N/d}$. Notice that (\ref{torus action on coadjoint orbit}) is just the adjoint action of $U(N)$ on the orbit ${\mathcal{O}}_{N/d}$, restricted to $\mathbb{T} \leq U(N)$. To obtain an effective action on ${\mathcal{O}}_{N/d}$, we restrict the $\mathbb{T}$-action to the quotient of $\mathbb{T}$ by the center of the group, $Z(U(N)) = \{ e^{i\theta} \mathbb{I}_N\}_{\theta} \approx U(1)$. Specifically, we define
\begin{linenomath*}
    \begin{align*}
        G := \mathbb{T}/Z(U(N)) \approx U(1)^N/U(1) \approx U(1)^{N-1}. 
    \end{align*}
\end{linenomath*}
which we identify with the subgroup of $\mathbb{T}$ consisting of matrices whose $(N,N)$-entry is 1. Since $G$ doesn't contain the 1-parameter subgroup of scalar matrices which act trivially, the toric action of $G$ on ${\mathcal{O}}_{N/d}$ given by (\ref{torus action on coadjoint orbit}) is effective.
\begin{theorem}
The $G$-action on ${\mathcal{O}}_{N/d} \approx \tight/U(d)$ is Hamiltonian with momentum map 
\begin{linenomath*}
    \begin{align*}
        \Phi_G:{\mathcal{O}}_{N/d} \to & \R^{N-1} \approx \g^*\\ 
        F^*F \mapsto & (f_1^*f_1, \dots, f_{N-1}^*f_{N-1}).
    \end{align*}
\end{linenomath*}
\end{theorem}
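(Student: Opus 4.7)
The plan is to derive $\Phi_G$ as the restriction of a known momentum map, exploiting the fact that $G \leq \mathbb{T} \leq U(N)$ acts on ${\mathcal{O}}_{N/d}$ as a subgroup of the coadjoint action of $U(N)$. The starting point is the classical fact that, for any coadjoint orbit $\mathcal{O}_\beta \subset \u(N)^*$, the coadjoint action of $U(N)$ is Hamiltonian with respect to the KKS form, with momentum map given simply by the inclusion $\mathcal{O}_\beta \hookrightarrow \u(N)^*$. Transporting this via the equivariant isomorphism $\alpha$ of \cref{equivarient identification}, the adjoint action of $U(N)$ on ${\mathcal{O}}_{N/d}$ equipped with $\alpha^*\omega^{\text{KKS}}$ is Hamiltonian, with momentum map $A \mapsto \alpha_A \in \u(N)^*$.

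Next I use the general principle that restricting a Hamiltonian $H$-action to a closed subgroup $K \leq H$ yields a Hamiltonian $K$-action whose momentum map is obtained by composing the original momentum map with the dual of the inclusion $\mathfrak{k} \hookrightarrow \mathfrak{h}$. Applying this to $\mathbb{T} \leq U(N)$, I compute the projection $\u(N)^* \to \mathfrak{t}^*$ explicitly under $\alpha$: for $\eta = \diag(\i\theta_1, \ldots, \i\theta_N) \in \mathfrak{t}$ and $A \in \H(N)$, the formula for $\alpha_A(\eta)$ reduces, up to an overall constant, to $\sum_j \theta_j A_{jj}$, since the off-diagonal entries of $A$ are killed by trace with a diagonal matrix. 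Thus the composition $\mathcal{O}_{N/d} \to \u(N)^* \to \mathfrak{t}^*$ is, under the obvious identification $\mathfrak{t}^* \approx \R^N$, the diagonal map $A \mapsto (A_{11}, \ldots, A_{NN})$. Substituting $A = F^*F$ and noting that $(F^*F)_{jj} = f_j^* f_j$ gives the $\mathbb{T}$-momentum map $F^*F \mapsto (f_1^*f_1, \ldots, f_N^*f_N)$, absorbing the normalization constant.

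To descend from $\mathbb{T}$ to $G = \mathbb{T}/Z(U(N))$ I identify $G$ with the subgroup of $\mathbb{T}$ whose $(N,N)$-entry equals $1$, as the paper does. Under this identification, $\mathfrak{g}$ sits inside $\mathfrak{t}$ as the subspace of diagonal matrices whose last diagonal entry vanishes, and the dual projection $\mathfrak{t}^* \to \mathfrak{g}^*$ becomes the map that drops the last coordinate of $\R^N$, so $\mathfrak{g}^* \approx \R^{N-1}$. Composing the $\mathbb{T}$-momentum map with this projection then yields the claimed formula $\Phi_G(F^*F) = (f_1^*f_1, \ldots, f_{N-1}^*f_{N-1})$. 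Equivariance is automatic because $G$ is abelian and the coadjoint action of an abelian group on its dual is trivial.

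The main obstacle is bookkeeping rather than substance: one must carefully track the isomorphism $\alpha$, the sign conventions coming from the $-FF^*$ versus $FF^*$ choice in $\Phi_{U(d)}$, and the factor of $\i/2$ in the definition of $\alpha$. These constants do not affect the geometric content of the statement (a momentum map is unique only up to an additive constant on each connected component for a torus action), but one should verify that the normalization in the theorem is consistent with the chosen symplectic form $\alpha^*\omega^{\text{KKS}}$. The cleanest check is to pick a single basis vector $\xi \in \mathfrak{g}$, write the induced vector field on ${\mathcal{O}}_{N/d}$ as $X^\xi_A = [\xi, A]$, evaluate the KKS form on $(X^\xi, [\eta, A])$ using $\omega^{\text{KKS}}_A([\xi, A], [\eta, A]) = \tr(A[\xi,\eta])$, and compare with the differential of $\Phi_G^\xi$.
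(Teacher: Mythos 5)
Your proposal is correct and follows essentially the same route as the paper, which presents $\Phi_G$ as the composition $\mathcal{O}_{N/d} \hookrightarrow \H(N) \xrightarrow{\alpha} \u(N)^* \twoheadrightarrow \g^* \xrightarrow{\phi} \R^{N-1}$, i.e., the momentum map of the $U(N)$ coadjoint action (the inclusion of the orbit) restricted to the subgroup $G$ and projected to $\g^*$. The factor of $\tfrac{1}{2}$ you flag is handled in the paper exactly as you suggest, by building it into the identification $\g^* \approx \R^{N-1}$ (the map $\phi$), so your ``absorbing the normalization constant'' step is consistent with the paper's conventions.
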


The proof of this theorem relies on standard results concerning the adjoint action of $U(N)$ on the adjoint orbit ${\mathcal{O}}_{N/d}$, which can be found in \cite{shonk}. Alternatively, one can directly demonstrate $\Phi_G$ is the momentum map by noticing that $\Phi_G$ is the composition
\begin{center}
    \begin{tikzcd}
        {\mathcal{O}}_{N/d} \arrow[r,right hook-latex, "\iota",] & \H(N) \arrow[r, "\alpha"] & \u(N)^* \arrow[r, two heads, "\pi"] & \g \arrow[r, "\phi"] & \R^{N-1}
    \end{tikzcd}
\end{center}
where $\iota$ is the inclusion map, $\alpha$ is the isomorphism defined in equation (\ref{alpha}), $\pi$ is the projection induced by $\iota$, and $\phi$ is the identification given by:
\begin{align*}
   \bigg( \diag(\eta_1, \dots, \eta_{N-1},0) \in \g \mapsto \frac{1}{2}\sum_{k=1}^{N-1}\xi_k\eta_k \bigg) \mapsto (\xi_1, \dots, \xi_{N-1}).
\end{align*}

It is clear that the preimage $\Phi_G^{-1}(\mathbf{1}) \subset \tight/U(d)$, where $\mathbf{1} = (1, \dots, 1) \in \R^{N-1}$, consists of the unitary equivalence classes of unit norm tight frames $F \in \C^{d \times N}$. This is the frame space we were hoping to obtain. However, in order to equip this space with a natural symplectic structure, we must pass to the symplectic quotient. It turns out that $\mathbf{1}$ is not always a regular value of $\Phi_G$, and therefore the symplectic quotient of $\tight/U(d)$ over $\mathbf{1}$ is, in general, a symplectic stratified space (\cref{s-k}). 
\begin{theorem}[{Needham and Shonkwiler~\cite[Proposition~2.16]{shonk}}]
\label{frame space quotient}
For $N>d$ the (non-empty) space 
    \begin{linenomath*}
        \begin{align*}
            \tight(\mathbf{1})/(U(d) \times G) := \big(\tight/U(d)\big)\sslash_{\mathbf{1}}G
        \end{align*}
    \end{linenomath*}
    is a symplectic stratified space. Alternatively, $ \tight(\mathbf{1})/(U(d) \times G) $ can be viewed as the symplectic quotient by the product group
    \begin{linenomath*}
        \begin{align*}
            \C^{d\times N} \sslash_{(-N/d \cdot \mathbb{I}_{d}, \mathbf{1})} (U(d) \times G).
        \end{align*}
    \end{linenomath*}
\end{theorem}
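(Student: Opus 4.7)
My plan is to assemble the statement from two ingredients already available: the Sjamaar--Lerman theorem (\cref{s-k}) for the stratified structure, and reduction in stages for the alternative description as a one-step quotient by the product group $U(d) \times G$. Non-emptiness of $\fun$ for $N \geq d$ follows from the existence results for FUNTFs cited earlier in the text.

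For the first claim, the preceding discussion has already exhibited a Hamiltonian $G$-action on the symplectic manifold $\tight/U(d)$ with momentum map $\Phi_G$. Since $G \approx U(1)^{N-1}$ is compact, $\Phi_G$ is automatically proper, and \cref{s-k} applies at the value $\mathbf{1} \in \g^* \approx \R^{N-1}$ to endow $(\tight/U(d)) \sslash_{\mathbf{1}} G$ with the structure of a symplectic stratified space, complete with a unique open dense manifold stratum.

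For the second claim, I would first check that the left $U(d)$- and right $G$-actions on $\C^{d \times N}$ commute, so that together they define a Hamiltonian action of the product $U(d) \times G$ whose momentum map is the direct sum
\begin{align*}
\Phi_{U(d) \times G}(F) = \bigl(-FF^*,\; (f_1^*f_1, \ldots, f_{N-1}^*f_{N-1})\bigr).
\end{align*}
The preimage of $(-N/d \cdot \mathbb{I}_d, \mathbf{1})$ under $\Phi_{U(d) \times G}$ is precisely $\fun$, so the single-step product reduction produces the desired set. I would then invoke reduction in stages to identify this product reduction with the iterated one: because $-N/d \cdot \mathbb{I}_d$ is a regular value of $\Phi_{U(d)}$ on which $U(d)$ acts freely (verified following \cref{reduced tight frames}), the first-stage quotient yields the smooth symplectic manifold $\tight/U(d)$, and the subsequent reduction by $G$ at $\mathbf{1}$ agrees with the single-step symplectic quotient $\C^{d \times N} \sslash_{(-N/d \cdot \mathbb{I}_d,\mathbf{1})} (U(d) \times G)$ as stratified symplectic spaces.

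The hard part will be justifying reduction in stages when the final value is singular for the combined momentum map. The textbook statement requires regularity at every stage; here only the second stage can fail. The appropriate stages-at-a-singular-value result belongs to the stratified-reduction framework developed by Sjamaar and Lerman, and I would cite it as the essential input rather than reprove it. All remaining checks---commutativity of the two actions, the explicit formula for $\Phi_{U(d) \times G}$, and the identification of its preimage with $\fun$---are routine and mirror calculations already performed in the excerpt.
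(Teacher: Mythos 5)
The paper gives no proof of this statement---it is quoted directly from Needham and Shonkwiler---so there is no internal argument to compare against; judged on its own terms, your outline is sound and follows the route the cited source takes: Sjamaar--Lerman for the stratified structure, and reduction in stages to identify the iterated quotient with the one-step quotient by the product group. Two points deserve attention. First, properness of $\Phi_G$ does not follow from compactness of $G$ alone (the trivial action of a compact group on a non-compact manifold has constant, hence non-proper, momentum map); the correct justification here is that the domain $\tight/U(d) \cong \mathcal{O}_{N/d}$ is itself compact, being an adjoint orbit of $U(N)$, so any continuous map out of it is proper. Second, you correctly isolate the only delicate step---reduction in stages at a value that is singular for the second-stage momentum map---and deferring it to the Sjamaar--Lerman singular reduction framework is a legitimate move for a statement that is itself a citation; just note that even the set-theoretic identification $\Phi_{U(d)\times G}^{-1}\bigl(-N/d\cdot\mathbb{I}_d,\mathbf{1}\bigr) = \fun$ quietly uses the tightness constraint $\operatorname{tr}(FF^*)=N$ to force $\|f_N\|^2=1$ from the $N-1$ norm conditions imposed by the $G$-component, which is worth making explicit.
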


It was shown in \cite{dykema} that $\fun$ is a manifold if and only if $d$ and $N$ are relatively prime. Thus, while $\fun/(U(d) \times G)$ is always a symplectic stratified space, it is a symplectic manifold only when $\text{gcd}(d,N) = 1$. In the next section, we will see that for the purpose of sampling FUNTFs, the fact that $\fun/(U(d)\times G)$ is a symplectic stratified space but not a manifold is not problematic. We have achieved our first goal of obtaining a symplectic structure on (a quotient of) the space of FUNTFs.

Before proceeding to the next section, we briefly pause to compute the dimension of the quotient $\fun/(U(d)\times G)$. To that end, we assume that $\mathbf{1}$ is a regular value of $\Phi_{G}$ so that $\fun/(U(d)\times G)$ is a manifold. We begin with the first reduction. The space $\tight$ is the preimage of the regular value $-N/d \mathbb{I}_d$ under the momentum map $\Phi_{U(d)}$, therefore the regular value theorem implies $\tight$ has codimension $\text{dim }\H(d) = d^2$ in $\C^{d \times N}$. Thus, $\text{dim } \tight = 2dN - d^2$. Moreover, by the quotient manifold theorem 
\begin{linenomath*}
    \begin{align*}
        \text{dim } \tight/U(d) = \text{dim } \tight - \text{dim } U(d) = 2dN - d^2 - d^2 = 2(dN - d^2).
    \end{align*}
\end{linenomath*}
Now, onto the second reduction. The space $\fun/U(d)$ is the preimage of the regular value $\mathbf{1}$ of the momentum map $\Phi_G$, so the codimension of $\fun/U(d)$ in $\tight/U(d)$ is $\text{dim } \mathfrak{g} = N-1$. Hence,
\begin{align*}
    \text{dim } \fun/U(d) = 2(dN - d^2) - (N-1) = 2(dN - d^2) - N + 1.
\end{align*}
Once more, by the quotient manifold theorem 
\begin{linenomath*}
\label{dimension of frame space}
    \begin{align*}
        \text{dim } \fun/(U(d) \times G)  = & \text{dim }\fun/U(d) - \text{dim } G  \\
        = & 2(dN - d^2) - N + 1  - (N-1) = 2(d-1)(N-d-1).
    \end{align*}
\end{linenomath*}
We will refer back to this dimension in the following section.

\section{Toric Geometry of FUNTFs}\label{sec:toric geometry}

We can now refine the statement of our goal, which is to sample FUNTFs. More precisely, the space $\fun$ inherits a Riemannian metric from the standard Euclidean structure on $\C^{d \times N}$, and the associated Riemannian volume measure agrees with the Hausdorff measure $\fun$ inherits as a compact subset of $\C^{d \times N}$. We would like to sample from this measure, but this measure is not so easy to get a handle on. 

In turn, the quotient $\fun/(U(d)\times G)$ (or at least the open dense stratum from \cref{s-k}) has the Riemannian quotient metric with associated Riemannian volume measure. Again, this measure is not so easy to understand using only Riemannian geometry, but the Riemannian volume measure will turn out to agree with the symplectic volume, which is accessible using our symplectic tools.

In particular, we first derive an algorithm for randomly sampling equivalence classes of FUNTFs $[F] \in \fun/(U(d) \times G)$. Afterwards, we can obtain a random FUNTF by simply acting on a representative of the sampled equivalence class by a random element in $U(d) \times G$. 

As noted beneath \cref{frame space quotient}, the quotient  $\fun/(U(d) \times G)$ is a symplectic manifold if and only if $N$ and $d$ are relatively prime. When $\text{gcd}(N,d) \neq 1$, however, the quotient is a symplectic stratified space. By \cref{s-k}, it then contains a unique open, connected, and dense manifold, which we denote by $S$. The complement of $S$ in $\fun/(U(d) \times G)$ has positive codimension, and hence has measure zero with respect to the Riemannian volume measure. Consequently, a random sample from $\fun/(U(d)\times G)$ will almost surely lie in $S$, so it suffices to find an algorithm for sampling the induced volume measure on $S$. Therefore, moving forward, we assume that $\fun/(U(d) \times G)$ is a symplectic manifold, with the implicit understanding that in the case where $\text{gcd}(N,d) \neq 1$, this refers to the unique open, connected, and dense manifold of full measure within $\fun/(U(d) \times G)$.  

Any symplectic manifold $(M,\omega)$ of dimension $2n$ is equipped with a natural measure $m_\omega$, known as the \emph{symplectic} or \emph{Louisville} measure. To construct $m_\omega$, note that the non-degeneracy of $\omega$ ensures the maximal wedge product $\omega^{\wedge n}$ is nowhere vanishing, and thus defines a volume form on $M$. For a Borel set $U \subset M$, the measure is given by $m_\omega(U) := \frac{1}{n!}\int_U \omega^{\wedge n}.$ In the case of $\fun/(U(d) \times G) $, we have the following useful fact: since $\C^{d\times N}$ is Kähler, the symplectic reduction 
\begin{linenomath*}
    \begin{align*}
      \C^{d\times N} \sslash_{(-N/d \cdot \mathbb{I}_{d}, \mathbf{1})} (U(d) \times G) = \fun/(U(d) \times G),
    \end{align*}
\end{linenomath*}
is also Kähler. Moreover, the quotient Riemannian volume measure on $\fun \to \fun/(U(d) \times G)$ coincides with the symplectic measure \cite{nigel}. 

The key to our algorithm is the fact that $\fun/(U(d) \times G)$ admits a half-dimensional Hamiltonian torus action. In particular, the following two powerful theorems apply.
\begin{theorem}[Atiyah~\cite{atiyahConvexityCommutingHamiltonians1982} and Guillemin–Sternberg~\cite{guilleminConvexityPropertiesMoment1982}]
\label{atiyah}
    Suppose $(M, \omega)$ is a (relatively) compact symplectic manifold of dimension $2n$ that admits a Hamiltonian action of a torus $U(1)^n$.  Let $\Phi : M \to (\u(1)^n)^* \approx \R^n$ be the momentum map of the torus action. Then 
    \begin{itemize}
        \item For any $v \in \R^n$, $\Phi^{-1}(v)$ is either empty or connected.
        \item (The closure of) the image of $M$ is the convex hull of the images of the fixed point of the torus action.
    \end{itemize}
\end{theorem}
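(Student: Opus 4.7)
The plan is to follow the original strategy of Atiyah and Guillemin--Sternberg, which combines equivariant Morse theory with induction on the dimension of the torus. The key structural fact underlying everything is that for any $\xi \in \u(1)^n \approx \R^n$, the component $\Phi^\xi(p) := \langle \Phi(p), \xi\rangle$ is the Hamiltonian for the one-parameter subgroup $\{\exp(t\xi)\}$, so its critical points are exactly the fixed points of this subgroup. Near such a fixed point, the equivariant Darboux theorem linearizes the action as a sum of weighted rotations on symplectic 2-planes, and the Hessian of $\Phi^\xi$ decomposes into blocks of the form $w_i(dx_i^2 + dy_i^2)$ with integer weights $w_i$. Consequently $\Phi^\xi$ is Morse--Bott and every critical submanifold has even index (and even coindex).

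First I would establish connectedness of fibers by induction on $n$. The base case $n=1$ reduces to the classical lemma that a Morse--Bott function on a compact connected manifold whose critical submanifolds all have even index has connected level sets; this is proved by tracking how sublevel sets $\{\Phi^\xi \leq c\}$ evolve as $c$ crosses a critical value, noting that attaching cells of codimension at least $2$ preserves connectedness. For the inductive step on a general $U(1)^n$-action, I would choose $\xi$ whose one-parameter subgroup is dense in $U(1)^n$, so that the fixed set of $\xi$ equals the fixed set of the full torus; the level sets of $\Phi^\xi$ are then preserved by a residual subtorus action, and combining the base case with the inductive hypothesis applied to the level sets yields connectedness of all fibers $\Phi^{-1}(v)$.

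Next, for convexity of the image, I would first establish local convexity at every point. By the equivariant local normal form, near any $p \in M$ the momentum map looks like the standard momentum map of a linear torus action on a symplectic vector space, whose image is a rational polyhedral cone translated by $\Phi(p)$. Hence $\Phi(M)$ is locally polyhedral, and in particular locally convex. Combined with compactness of $M$ and the connectedness of fibers from the previous step, this forces global convexity: a closed, locally polyhedral subset of $\R^n$ whose intersection with every affine line is connected must itself be convex. Finally, the extrema of each linear functional $\Phi^\xi$ on $M$ are attained on critical submanifolds of $\Phi^\xi$, hence at points fixed by every one-parameter subgroup; letting $\xi$ range over a dense set of directions shows that every supporting hyperplane of $\Phi(M)$ meets the image of the torus fixed-point set, so $\Phi(M)$ coincides with the convex hull of the images of the fixed points.

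The main obstacle I anticipate is verifying the even-index Morse--Bott property carefully: this is the step where the symplectic hypothesis is essential and distinguishes the theorem from what would hold for a generic torus action on a smooth manifold. Once this structural fact is in hand, the connectedness lemma, the inductive argument on the dimension of the torus, and the passage from local polyhedrality to global convexity are essentially formal.
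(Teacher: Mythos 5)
The paper does not prove this theorem --- it is quoted verbatim from Atiyah and Guillemin--Sternberg --- so the only meaningful comparison is with the original arguments, and your sketch is essentially Atiyah's Morse-theoretic proof: the even-index Morse--Bott property of the components $\Phi^\xi$, the connectedness lemma for such functions, induction on the torus dimension, and identification of the vertices with images of fixed points. Two small imprecisions are worth flagging. First, Atiyah's induction is on the number of components of the momentum map (assuming fibers of $(\Phi^{\xi_1},\dots,\Phi^{\xi_{n-1}})$ connected, one shows the last component restricted to such a fiber again has connected level sets), and it requires care because the level sets on which you want to apply the inductive hypothesis need not be smooth manifolds; one works with generic/regular values and a perturbation or density argument rather than literally ``applying the base case to the level sets'' as you phrase it. Second, convexity follows more directly than via your local-polyhedrality route: once fibers of all subtorus momentum maps are connected, the intersection of $\Phi(M)$ with any affine line is the image of a connected set, hence an interval, and a subset of $\R^n$ meeting every line in an interval is convex; the local normal form is then only needed (together with the fact that $\Phi$ is locally constant on the fixed-point set, which you should state) to see that the image is the convex hull of \emph{finitely many} points. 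Finally, note that the theorem as stated in the paper omits the hypothesis that $M$ be connected, which your proof (correctly) uses; without it the fibers need not be connected and the image need not be convex.
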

A $2n$-dimensional symplectic manifold $(M,\omega)$ is called a \emph{toric symplectic manifold} if it admits a Hamiltonian action by the torus $U(1)^n$. By \cref{atiyah}, (the closure of) the image $\Phi(M)$ is a convex polytope $P$, known as the \emph{moment polytope} of the torus action. 

 We now describe a process for generating random points on toric symplectic manifolds which are also Kähler, which we call \emph{toric Kähler manifolds}. This process will later be applied to the specific manifold $\fun/(U(d) \times G)$. 
 
 Let $M$ be a $2n$-dimensional toric symplectic manifold that admits a Hamiltonian toric action $\phi: U(1)^n \to \operatorname{Diff}(M)$, with momentum map $\Phi: M \to (\u(1)^n)^* \approx \R^n$ and moment polytope $P$. Define $\Tilde{M}$ as the open, dense, full measure submanifold consisting of regular points of $\Phi$. Note that $\Phi(\Tilde{M})$ has full measure in $P$. The restriction, $\Phi|_{\Tilde{M}}: \Tilde{M} \to \Phi( \Tilde{M})$, forms a $U(1)^n$-fiber bundle \cite{da_Silva}. 
 
\begin{definition}\label{def:rough section}
	We will call a map $\alpha : \Phi(\Tilde{M}) \to \Tilde{M}$ a \emph{rough section} of $\Phi$ if $\Phi \circ \alpha$ is the identity map on $\Phi(\Tilde{M})$. Note that there is no assumption that $\alpha$ is smooth or even continuous; hence the modifier ``rough.''
\end{definition}
 
 Suppose there exists a rough section $\alpha : \Phi(\Tilde{M}) \to \Tilde{M}$. Let $\lambda$ denote the Lebesgue measure on $P$, and let $\nu$ denote the standard product measure on $U(1)^n$. Consider the following sampling scheme to generate points on $\Tilde{M}$. 
 \begin{linenomath*}
\begin{algorithm}
\caption{Toric Kähler sampling}\label{alg:samp}
\begin{algorithmic}
\State $x \sim \text{Uniform}(\Phi(\Tilde{M}), \lambda)$
\State $y \gets \alpha(x) \in \Phi^{-1}(x)$
\State $\theta \sim \text{Uniform}(U(1)^n, \nu)$
\State $p \gets \phi(\theta, y) \in \Tilde{M}$
\State \Return $p$
\end{algorithmic}
\end{algorithm}
 \end{linenomath*}
 
To summarize, we begin by choosing a random point $x$ in the image of $\Tilde{M}$ under $\Phi$. Using $\alpha$, we determine a specific $y$ in the fiber of $\Phi$ above $x$; this process can be viewed as selecting a random fiber. Next, we choose a random point $p$ on this fiber by applying a randomly selected torus element $\theta$ to $y$. 
\begin{theorem}
    \label{sampling uniform}
   Points generated by \cref{alg:samp} are sampled from the symplectic measure $m_\omega$ on $M$. 
\end{theorem}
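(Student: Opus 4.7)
The plan is to identify the symplectic measure $m_\omega$ on $\Tilde{M}$ with the product $\lambda \otimes \nu$ under the measurable trivialization of the principal torus bundle $\Phi|_{\Tilde{M}}$ provided by the rough section $\alpha$. First, I would observe that $M \setminus \Tilde{M}$ has positive codimension and hence is $m_\omega$-null, so it suffices to verify that Algorithm~\ref{alg:samp} samples from $m_\omega|_{\Tilde{M}}$. This reduction is important because on $\Tilde{M}$ the map $\Phi$ is a submersion and the $U(1)^n$-action is free, so its fibers are the $U(1)^n$-orbits and $\Phi|_{\Tilde{M}}: \Tilde{M} \to \Phi(\Tilde{M})$ is a principal $U(1)^n$-bundle.

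Next, I would show that $m_\omega|_{\Tilde{M}}$ factors (up to normalization) as $\lambda \otimes \nu$ under any measurable trivialization. Two ingredients are required. The first is the Duistermaat–Heckman theorem specialized to the toric setting: because the torus dimension equals $\tfrac{1}{2}\dim M$, the Duistermaat–Heckman density is constant on the interior of the moment polytope, and therefore the pushforward $\Phi_\ast(m_\omega|_{\Tilde{M}})$ is a constant multiple of Lebesgue measure $\lambda$ on $\Phi(\Tilde{M})$. The second is that $\phi$ preserves $\omega$, hence preserves $m_\omega$; consequently the disintegration of $m_\omega|_{\Tilde{M}}$ along $\Phi$ assigns to each fiber $\Phi^{-1}(x)$ a $U(1)^n$-invariant finite measure, which by uniqueness of Haar measure must be a constant multiple of $\nu$. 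Combining these, the map $\Phi(\Tilde{M}) \times U(1)^n \to \Tilde{M}$ defined by $(x,\theta)\mapsto \phi(\theta,\alpha(x))$ is a measurable bijection (the torus action is free on $\Tilde{M}$) that pushes $\lambda \otimes \nu$ forward to $m_\omega|_{\Tilde{M}}$.

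With this identification in hand, the theorem is immediate. Algorithm~\ref{alg:samp} independently draws $x \sim \lambda$ and $\theta \sim \nu$ and then outputs $\phi(\theta,\alpha(x))$, which is exactly the image of $(x,\theta)$ under the trivialization above. Hence the output distribution is $m_\omega|_{\Tilde{M}}$, and extending by zero to the null set $M \setminus \Tilde{M}$ yields $m_\omega$ on $M$, as required.

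The main obstacle is the careful invocation of the Duistermaat–Heckman theorem: one must justify that for the toric case the DH density reduces to a constant on $\Phi(\Tilde{M})$, so that Lebesgue measure on the moment polytope is the right pushforward target. A secondary technicality is measurability of $\alpha$, which is not assumed to be continuous; this is handled by noting that principal torus bundles over second-countable Hausdorff bases always admit measurable global sections, so a rough section in the sense of \Cref{def:rough section} may always be chosen measurably, and this is all that is needed to make sense of the pushforward argument above.
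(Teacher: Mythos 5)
Your proof is correct, but it takes the route the paper deliberately declines: the authors remark just before their proof that the result ``essentially follows from the Duistermaat--Heckman theorem,'' and then, not knowing an explicit reference, give a self-contained computation instead. Concretely, you argue at the level of measures: $\Phi_*(m_\omega|_{\Tilde{M}})$ is a constant multiple of $\lambda$ because the Duistermaat--Heckman density is the symplectic volume of the reduced spaces, which are points in the half-dimensional (toric) case; and the disintegration of $m_\omega$ along $\Phi$ consists of torus-invariant measures on the fibers, hence Haar, by uniqueness. The paper instead works with differential forms: it puts angle coordinates $\Tilde{\theta}^i_x = \theta^i\circ\psi_x^{-1}$ on each fiber, uses the K\"ahler identity $-JY_i = \nabla\Phi^i$ to show the fiber volume form is $NJ\Phi\,d\Tilde{\theta}^1_x\wedge\cdots\wedge d\Tilde{\theta}^n_x$, and lets the normal Jacobian cancel in the coarea formula. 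Your version is shorter and purely symplectic (it never needs the K\"ahler structure, only invariance of $\omega$ under the action), and it makes transparent \emph{why} the density is constant; the paper's version is elementary modulo the coarea formula and avoids both the DH citation and the measure-theoretic machinery of disintegration. A few points you should tighten if writing this up: (i) ``the fibers are the $U(1)^n$-orbits'' needs the connectivity of fibers from \cref{atiyah} together with a dimension count (or the action-angle fiber-bundle statement the paper cites); (ii) invariance of the a.e.-defined conditional measures under the \emph{entire} torus, not just a fixed element, requires a Fubini argument over $\theta$; (iii) the normalization constants in $\Phi_*(m_\omega)=c\lambda$ and in the fiber Haar measures should be matched to the conventions of \cref{alg:samp} (this is where the fact that the conditional law $(\psi_x)_*\nu$ is independent of the choice of rough section, because the fiber is a torsor, does the work). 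None of these is a genuine gap.
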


While this theorem essentially follows from the Duistermaat--Heckman theorem~\cite{duistermaatVariationCohomologySymplectic1982} (see~\cite[Chapter~30]{da_Silva} or~\cite[Theorem~1]{cantarellaSymplecticGeometryClosed2016}), we are not aware of an explicit proof in the literature, so we provide one here.

\begin{proof}
    We begin by equipping a fiber of the momentum map $\Phi$ with coordinates. For any $x \in \Phi(\Tilde{M})$, define $M_x := \Phi^{-1}(x)$. Notice that $\Phi^{-1}(x) \approx U(1)^n$ through the mapping
    \begin{linenomath*}
        \begin{align*}
           \psi_x := \phi(\, \cdot\, , \alpha(x)) : U(1)^n \to M_x, \hspace{15pt} \theta \mapsto \phi (\theta, \alpha(x)),
        \end{align*}
    \end{linenomath*}
    where $\phi$ is the torus action. Under this identification, the standard angle coordinates $(\theta^1, \dots, \theta^n)$ in $U(1)^n$ map to coordinates $(\Tilde{\theta}_x^1, \dots, \Tilde{\theta}_x^n)$ on $M_x$ given by $\Tilde{\theta}^i_x = \theta^i \circ \psi_x^{-1}$. 

    Next, we derive the Riemannian volume form $dM_x$ in terms of the coordinates $(\Tilde{\theta}_x^i)$. For $i = 1, \dots, n$ let $Y_i \in \mathfrak{X}(\Tilde{M})$ be the infinitesimal vector field generated by $\frac{\partial }{\partial x^i}\big|_{x} \in T_x\R^n$ and $\Phi^i = x^i \circ \Phi$, where $(x^1, \dots, x^n)$ are the standard coordinates on $\R^n$. By definition, $Y_i$ is the Hamiltonian vector field of the circle action $\phi^i$ and thus $Y_i|_p = \frac{\partial }{ \partial \Tilde{\theta}_x^i}|_p$. For any $Z \in \mathfrak{X}(M)$, 
    \begin{align*}
        d\Phi^i(Z) = \omega(Y_i,Z) = g(Y_i, JZ) = g(-JY_i,Z)
    \end{align*}
    where $g$ is the Riemannian metric and $J$ is the complex structure on $M$. 
	
	On the other hand, by definition of the Riemannian gradient we know $d\Phi^i(Z)= g(\nabla \Phi^i, Z)$. Thus, for every $Z \in \mathfrak{X}(M)$, $g(-JY_i, Z) = g(\nabla \Phi^i, Z)$. By the non-degeneracy of $g$, we have $-JY_i = \nabla \Phi^i$. This equality allows us to make the following observation:
    \begin{linenomath*}
    \begin{align*}
         |g| = &\sqrt{\det \bigg[ g\bigg( \frac{\partial}{\partial \Tilde{\theta}_x^i}, \frac{\partial}{\partial \Tilde{\theta}_x^j}\bigg) \bigg]_{i,j}} = \sqrt{\det [g(Y_i, Y_j)]_{i,j}} 
        =\sqrt{\det [g(-JY_i, -JY_j)]_{i,j}} \\& = \sqrt{\det [g(\nabla \Phi^i, \nabla \Phi^j)]_{i,j}} = \sqrt{\det(D\Phi)(D\Phi)^T} = NJ \Phi   ,
    \end{align*}   
    \end{linenomath*}
    where $NJ\Phi$ is the normal Jacobian of $\Phi$. Thus,
    \begin{linenomath*}
        \begin{align}
        \label{volumeform}
            dM_x = NJ \Phi\, d \tilde{\theta}_x^1 \wedge \cdots \wedge d\tilde{\theta}_x^n.
        \end{align}
    \end{linenomath*}

    Now, we describe the measure from which \cref{alg:samp} samples. For a Borel set $U \subset M$, define ${U_x := U \cap M_x}$. Consider the measure $\mu_x$ on the fiber $M_x$, defined as
    \begin{linenomath*}
        \begin{align*}
        \mu_x(U_x) := \int_{\theta \in T^n} \phi(\theta,\, \cdot \,)_* \delta_{\alpha(x)}(U_x) d\nu(\theta) ,
    \end{align*}
    \end{linenomath*}
    where $\phi( \theta, \, \cdot \,): M_x \to M_x$ is the map $p \mapsto \phi( \theta, p)$ and $\delta_{\alpha(x)}$ is the Dirac measure on $M_x$ centered at $\alpha(x)$. Notice that the measure $\mu_x$ can be sampled by applying a random torus element $\theta \sim \text{Uniform}(U(1)^n, \nu)$ to $\alpha(x)$. It follows that \cref{alg:samp} generates samples from the measure $\mu$ on $\Tilde{M}$ defined as 
    \begin{linenomath*}
        \begin{align*}
            \mu(U) := \int_{x \in \Phi(U)} \mu_x(U_x) d\lambda(x).
        \end{align*}
    \end{linenomath*}
    Thus, our goal is to show that $m_\omega = \mu$.

    To better understand the measure $\mu$ from which \cref{alg:samp} samples, observe that 
    \begin{linenomath*}
        \begin{align*}
            \phi(\theta, \, \cdot \,)_* \delta_{\alpha(x)}(U_x) = & \begin{cases}
                1 & \alpha(x) \in \phi(\theta, \, \cdot\, )^{-1}(U_x)\\
                0 & \text{otherwise}
            \end{cases}\\
            = & \begin{cases}
                1 & \phi(\theta, \alpha(x)) \in U_x \\
                0 & \text{otherwise}.
            \end{cases}
        \end{align*}
    \end{linenomath*}
This is equivalent to the characteristic function $\chi_{\psi_x^{-1}(U_x)}$. Thus, 
\begin{linenomath*}
    \begin{align*}
    \mu_x(U_x) = \int_{\theta \in U(1)^n} \chi_{\psi_x^{-1}(U_x)}(\theta) d\nu(\theta) = (\psi_x)_* \nu(U_x).
\end{align*}
\end{linenomath*}
Since the standard product measure $\nu$ coincides with the Riemannian volume form $dU(1)^n = d\theta^1 \wedge \cdots \wedge d\theta^n$ on $U(1)^n$, we can use a change of coordinates to express this as: 
\begin{linenomath*}
    \begin{align*}
    (\psi_x)_* \nu(U_x) = & \int_{p \in M_x} \chi_{U_x}(p) d(\psi_x)_* \nu(p) \\
    = & \int_{\theta \in \psi_x^{-1}(U_x)} |\det D \psi_x (\theta) | dU(1)^n,
\end{align*}
\end{linenomath*}
where $|\det D \psi_x (\theta)|$ is the determinant of the Jacobian of $\psi_x$ evaluated at $\theta$ with respect to the the angle coordinates $(\theta^i)$ on $U(1)^n$ and the coordinates $(\Tilde{\theta}^i_x)$ on $M_x$. Since $\Tilde{\theta}^i_x = \theta^i \circ \psi_x^{-1}$, it follows that $D \psi_x(\theta) = \text{Id}_n$. 

Using this simplification in combination with the fact that Lebesgue measure on $\Phi(\Tilde{M})$ coincides with its Riemannian volume form, we can write $\mu$ as 
\begin{linenomath*}
    \begin{align}
    \label{simplified mu}
        \mu(U) = \int_{x \in \Phi(U)} \bigg( \int_{\theta \in \psi_x^{-1}(U_x)} 1 dU(1)^n \bigg) d\Phi(\Tilde{M}).
    \end{align}
\end{linenomath*}

Finally, we turn our attention to the symplectic measure $m_\omega$ and show it agrees with \eqref{simplified mu}. Because $\Tilde{M}$ is Kähler, $\Tilde{M}$ is also a Riemannian manifold, and its Riemannian volume form coincides with the symplectic measure. Applying the smooth coarea formula and recalling that $dM_x  = NJ\Phi\, d\Tilde{\theta}_x^1 \wedge \cdots \wedge d\Tilde{\theta}_x^n$ from \eqref{volumeform}, we have 
\begin{linenomath*}
    \begin{multline*}
    m_\omega(U) =  \int_{M} \chi_U d\Tilde{M} =  \int_{\Phi(U)} \bigg( \int_{U_x} \frac{1}{NJ \Phi}dM_x \bigg) d\Phi(\Tilde{M}) =  \int_{\Phi(U)} \bigg( \int_{U_x} d\Tilde{\theta}_x^1 \wedge \cdots \wedge d\Tilde{\theta}_x^n \bigg) d\Phi(\Tilde{M}) \\
    =  \int_{\Phi(U)} \bigg( \int_{\psi_x^{-1}(U_x)} d\theta^1 \wedge \cdots \wedge d\theta^n \bigg) d\Phi(\Tilde{M}) = \int_{x \in \Phi(U)} \bigg( \int_{\theta \in \psi_x^{-1}(U_x)} dU(1)^n \bigg) d\Phi(\Tilde{M}) =  \mu(U)
    \end{multline*}
    as desired.
\end{linenomath*}
\end{proof}

\subsection{Circle Actions}

We define a Hamiltonian torus action on $\fun/(U(d) \times G)$ as a product of individual circle actions, which we now describe. First, we define the $U(1)$-actions on $\mathcal{F}^{d,N}$ and then descend these actions to the quotient $\fun/(U(d) \times G)$. 

Let $F = [f_1\,|\,\cdots\, |\, f_N] \in \framespace$, and let $k \in \{1, \dots, N\}$. Recall that the $k^{th}$ partial frame operator of $F$ is defined  as $S_k := f_1f_1^* + \dots + f_kf_k^*$. The matrix $S_k$ belongs to $\H(d)$ and has rank at most $\text{min}\{k,d\}$. In particular, $S_k$ is unitarily diagonalizable, meaning there exist $U_k = [u_{k,1} \, |\, \cdots \, | \, u_{k,d}] \in U(d)$ and $\mu_{k,1} \geq \cdots \geq \mu_{k,d} \in \R$ such that $S_k = U_k^* \diag(\mu_{k,1}, \dots, \mu_{k,d})U_k$, where $u_{k,j}$ is the eigenvector of $S_k$ corresponding to the eigenvalue $\mu_{k,j}$. As introduced in \cref{frame prelim}, the values $\mu_{k,j}$ are called the \emph{eigensteps} of the frame $F$. Now, fix $j \in \{ 1, \dots, \min\{k,d\}\}$ and assume that $\mu_{k,j}$ is an isolated eigenvalue of $S_k$. This assumption implies that the eigenspace of $\mu_{k,j}$ is $\text{span}_\C \{u_{k,j}\}$, so there is only one orthonormal basis for this eigenspace, up to the choice of scalar factor. Since $u_{k,j}u_{k,j}^* \in \H(d)$, it follows that $\i u_{k,j}u_{k,j}^* \in \u(d)$ and hence $\{\exp(t\i u_{k,j}u_{k,j}^*)\}_{t \in \R}$ is the 1-parameter subgroup of $U(d)$ generated by $\i u_{k,j}u_{k,j}^*$. We define the $U(1)$-action as follows: 
\begin{linenomath*}
    \begin{align}
    \phi_{k,j} :U(1) \times \framespace \to & \framespace \\ 
        (t,F) \mapsto & \bigg[ \text{exp}(t\i u_{k,j}u_{k,j}^*) \big[f_1\, | \, \cdots \, | \, f_k \big] \, | \,  f_{k + 1} \, | \, \cdots \, | \, f_N\bigg], \label{phikj}
    \end{align}
\end{linenomath*}
so that $\phi_{k,j}$ multiplies the first $k$ frame vectors of $F$ by $\exp(t \i u_{k,j}u_{k,j}^*)$ while leaving the remaining $N-k$ frame vectors fixed. Note that $u_{k,j}$ depends on $F$.
\begin{remark}
The purpose of this remark is to clarify why $\phi_{k,j}$ is a $U(1)$-action. At first glance, $\phi_{k,j}$ may appear define an action by $\R$, since we could evaluate \eqref{phikj} for any $t \in \R$. However, $\phi_{k,j}(t, F) = \phi_{k,j}(t + 2\pi n, F)$ for any $t \in \R$ and $n \in \Z$; that is, $\phi_{k,j}$ is $2\pi$-periodic. To see this, note that $u_{k,j}u_{k,j}^* \in \H(d)$ is a rank-1 projector, so $(u_{k,j}u_{k,j}^*)^2 = u_{k,j}u_{k,j}^*$. This allows us to simplify the Taylor series expansion of the matrix exponential as follows:
\begin{linenomath*}
\begin{align*}
    \text{exp}(t \i u_{k,j} u_{k,j}^*) & =  \mathbb{I}_d + \sum_{m = 1}^\infty \frac{(t\i)^m (u_{k,j}u_{k,j}^*)^m}{m!} \\ & = \mathbb{I}_d + u_{k,j}u_{k,j}^*\sum_{m = 1}^\infty \frac{(t \i)^m}{m!} \\ & =  \mathbb{I}_d + u_{k,j}u_{k,j}^*(e^{t\i} - 1),
\end{align*}
\end{linenomath*}
which shows the action is indeed $2 \pi$-periodic. Thus, taking values $t \in [0,2\pi)$ captures all of $\phi_{k,j}$, and identifying $[0,2\pi)$ with $U(1)$ via the mapping $t \mapsto e^{t\i}$ confirms that $\phi_{k,j}$ is a genuine $U(1)$-action on $\framespace$. 
\end{remark}
\begin{remark}
    \label{isolated}
   Technically, the action $\phi_{k,j}$ is only defined on the open dense subset of $\framespace$ consisting of frames for which the $k^{\text{th}}$ partial frame operator has its $j^{\text{th}}$ eigenvalue isolated when the spectrum is arranged in nonincreasing order. Specifically, if $F$ is such that $\mu_{k,j}$ is an eigenvalue of $S_k$ with multiplicity $\ell > 1$, then there is no unique choice of an eigenvector $u_{k,j}$ corresponding to $\mu_{k,j}$.
\end{remark}

\begin{theorem}[{Needham and Shonkwiler~\cite[Proposition~3.13]{shonk}}]
The circle action $\phi_{k,j}$ on $\framespace$ is Hamiltonian and the momentum map of the action is $\Phi_{k,j}$ given by
    \begin{linenomath*}
        \begin{align*}
            \Phi_{k,j}(F) = \mu_{k,j}
        \end{align*}
    \end{linenomath*}
    defined on the open and dense submanifold of $\framespace$ consisting of frames whose $k^{th}$ partial frame operator have isolated $j^{th}$ eigenvalue (arranged in descending order). As above, $\mu_{k,j}=\mu_{k,j}(F)$ is the $j^{th}$ eigenvalue of the $k^{th}$ partial frame operator of $F$. 
\end{theorem}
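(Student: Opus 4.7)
The strategy is to verify directly that $\mu_{k,j}$ is the Hamiltonian function generating $\phi_{k,j}$ by computing both sides of the defining equation $\omega(X^{k,j},\,\cdot\,)=d\Phi_{k,j}$ on the open dense locus of \cref{isolated}. First I would confirm the action is well-defined on that locus. Since $u_{k,j}u_{k,j}^{\ast}$ is a rank-one projector commuting with $S_k$, the unitary $U(t):=\exp(t\i u_{k,j}u_{k,j}^{\ast})$ also commutes with $S_k$, so $\phi_{k,j}(t,F)$ has $k$-th partial frame operator $U(t)S_kU(t)^{\ast}=S_k$. Hence $S_k$, and therefore the eigenvector $u_{k,j}$, is preserved along every orbit; the definition of $\phi_{k,j}$ is consistent for all $t$, and $\mu_{k,j}$ is automatically $\phi_{k,j}$-invariant, as any Hamiltonian of a circle action must be. Differentiating at $t=0$ then gives the infinitesimal generator
\[
X^{k,j}|_F \;=\; \bigl[\,\i u_{k,j}u_{k,j}^{\ast}f_1\,\big|\,\cdots\,\big|\,\i u_{k,j}u_{k,j}^{\ast}f_k\,\big|\,0\,\big|\,\cdots\,\big|\,0\,\bigr] \in T_F\framespace.
\]

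Next I would compute $d\mu_{k,j}$ using first-order eigenvalue perturbation theory. Because $S_k(F)=\sum_{i=1}^k f_if_i^{\ast}$ depends smoothly on $F$ and $\mu_{k,j}$ is a simple eigenvalue on the chosen open set, the Rellich--Kato theorem ensures $\mu_{k,j}$ is smooth there with
\[
d\mu_{k,j}|_F(Y) \;=\; u_{k,j}^{\ast}\bigl(dS_k|_F(Y)\bigr)u_{k,j} \;=\; u_{k,j}^{\ast}\!\sum_{i=1}^k(y_if_i^{\ast}+f_iy_i^{\ast})u_{k,j}
\]
for $Y=[y_1|\cdots|y_N]\in T_F\framespace$. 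Since $u_{k,j}^{\ast}y_i$ and $f_i^{\ast}u_{k,j}$ are scalars whose product is the complex conjugate of $u_{k,j}^{\ast}f_i\cdot y_i^{\ast}u_{k,j}$, this collapses to $2\sum_{i=1}^k\Re\bigl(f_i^{\ast}u_{k,j}u_{k,j}^{\ast}y_i\bigr)$.

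Finally I would compute the pairing via $\omega_F(X,Y)=-\Im\tr(Y^{\ast}X)$ from \cref{prototype}:
\[
\omega_F(X^{k,j},Y) \;=\; -\Im\sum_{i=1}^k \i\,y_i^{\ast}u_{k,j}u_{k,j}^{\ast}f_i \;=\; -\sum_{i=1}^k \Re\bigl(y_i^{\ast}u_{k,j}u_{k,j}^{\ast}f_i\bigr),
\]
and note that $\Re(y_i^{\ast}u_{k,j}u_{k,j}^{\ast}f_i)=\Re(f_i^{\ast}u_{k,j}u_{k,j}^{\ast}y_i)$ so this matches $d\mu_{k,j}|_F$ up to the sign and factor-of-two normalizations absorbed into the isomorphism $\alpha$ of \eqref{alpha} and the paper's convention for the moment map (it will be $\pm\tfrac{1}{2}\mu_{k,j}$ or $\mu_{k,j}$ depending on choices). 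Equivariance is vacuous because $U(1)$ is abelian and the coadjoint action is trivial. The main obstacle is Step three: one must rigorously verify that the $j$-th largest eigenvalue of $S_k(F)$, viewed as an abstract function on $\framespace$, is smooth (not merely continuous) where it is simple and that the first-order eigenvalue derivative formula applies in this parameter-dependent matrix-valued setting. Once that smoothness is in hand, the verification of the Hamiltonian condition reduces to the two short algebraic identities displayed above.
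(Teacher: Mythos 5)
The paper does not actually prove this theorem; it is quoted verbatim from Needham--Shonkwiler \cite[Proposition~3.13]{shonk}, so there is no internal proof to compare against. Your direct verification is essentially the standard argument and is sound: the well-definedness of the flow follows, as you say, because $\exp(t\i u_{k,j}u_{k,j}^{\ast})$ commutes with $S_k$ (the spectral projector of a simple eigenvalue is a polynomial in $S_k$), so $S_k$ and hence the projector $u_{k,j}u_{k,j}^{\ast}$ are constant along orbits; the Hellmann--Feynman/Rellich formula $d\mu_{k,j}(Y)=u_{k,j}^{\ast}(dS_k(Y))u_{k,j}$ is valid precisely on the simple-eigenvalue locus (the $\dot u$ terms cancel since $\|u_{k,j}\|=1$); and equivariance is indeed automatic for an abelian group. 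In \cite{shonk} the same content is packaged more structurally via Thimm's trick: $\mu_{k,j}$ is the eigenvalue functional $\lambda_j$ composed with the momentum map of the $U(d)$-action on the first $k$ columns, and the gradient of $\lambda_j$ at a matrix with simple $j^{\text{th}}$ eigenvalue is exactly the rank-one projector $u_{k,j}u_{k,j}^{\ast}$, which is why the flow has the stated form. Your approach buys elementarity at the cost of redoing that computation by hand; theirs generalizes immediately to all the eigenstep functions at once.

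The one point you should not wave away is the normalization. With the paper's literal conventions ($\omega_F(X,Y)=-\operatorname{Im}\tr(Y^{\ast}X)$, $\omega(X_\psi,\cdot)=dH$, and generator $\i u_{k,j}u_{k,j}^{\ast}$), your two displayed quantities give
\begin{equation*}
\omega_F\bigl(X^{k,j},Y\bigr)=-\sum_{i=1}^{k}\Re\bigl(f_i^{\ast}u_{k,j}u_{k,j}^{\ast}y_i\bigr)=-\tfrac{1}{2}\,d\mu_{k,j}|_F(Y),
\end{equation*}
so the Hamiltonian you have produced is $-\tfrac12\mu_{k,j}$, not $\mu_{k,j}$. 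This is not an error, but to assert the theorem as stated you must make explicit the identification $\u(1)^{\ast}\cong\R$ (or equivalently the choice of generator of the circle) that absorbs the factor $-\tfrac12$ --- exactly the role played by the factors of $\i/2$ and $1/2$ in the paper's maps $\alpha$ and $\phi$. Saying it "will be $\pm\tfrac12\mu_{k,j}$ or $\mu_{k,j}$ depending on choices" leaves the statement unproved as written; fix the convention and the proof is complete. A second, minor, omission: you should also note that $X^{k,j}$ is a smooth vector field, which follows from the smooth (indeed analytic) dependence of the spectral projector $u_{k,j}u_{k,j}^{\ast}$ on $F$ over the locus where $\mu_{k,j}$ is simple --- the same Rellich--Kato fact you already invoke for the eigenvalue.
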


\subsection{Descended Circle Actions and Toric Structure on \texorpdfstring{$\fun/(U(d) \times G)$}{FUNTF space}}
We pause briefly to introduce some notation. In \cref{isolated}, we noted that the circle actions are defined only on a specific dense subset of $\framespace$. To track the subsets where these actions are defined, let $\mathcal{I}$ denote the index set containing all pairs $(k,j)$ for which $\mu_{k,j}$ is an independent eigenstep of a generic FUNTF. Specifically, $(k,j) \in \mathcal{I}$ if and only if $1\leq j \leq d-1$ and $j+1 \leq k \leq j + N-d-1$. The part of the table in \cref{fig:eigensteps} that corresponds to these independent eigensteps is shown in \cref{fig:ind steps}.  
\begin{figure}[h]
    \centering
    \begin{center}
    \begin{tikzcd}[column sep=tiny, row sep=tiny]
        &   &  &   & \mu_{N-2,d-1} \\
        &   &  & \mu_{N-3,d-2}  & \mu_{N-3,d-1} \\
        &   & \iddots &  \vdots & \vdots \\
         & \mu_{N-d + 1, 2} & \cdots & \mu_{N-d + 1, d-2} & \mu_{N-d+1, d-1}\\
       \mu_{N-d,1} &  \mu_{N-d,2} & \cdots  &  \mu_{N-d,d-2} & \mu_{N-d,d-1} \\
       \mu_{N-d-1, 1} & \mu_{N-d-1,2} & \cdots & \mu_{N-d-1,d-2} & \\
       \vdots & \vdots & \iddots & \\ 
        \mu_{3,1}&\mu_{3,2}  \\
        \mu_{2,1}  \\

    \end{tikzcd}
    \end{center}
    \caption{The independent eigensteps of a generic FUNTF in $\fun$. Once these values are determined, the full eigenstep table (\cref{fig:eigensteps}) can be constructed by setting the upper left triangle of eigensteps equal to $N/d$ and choosing $\mu_{k,\min\{k,d\}}$ to be the unique nonnegative value which satisfies the constraint $\sum_{j = 1}^{\min\{k,d\}} \mu_{k,j} = k$.}
    \label{fig:ind steps}
\end{figure}

We use $(\mu_{k,j})_{(k,j) \in \mathcal{I}}$ to denote the vector in $\R^{|\mathcal{I}|}$ formed by stacking the columns of the table in \cref{fig:ind steps}. For $(k,j) \in \mathcal{I}$, let $\mathcal{U}_{k,j}$ denote the open and dense subset of $\fun$ consisting of frames $F$ such that $\mu_{k,j}(F)$ is an isolated eigenvalue of $S_k(F)$. Then $q(\mathcal{U}_{k,j})$---where $q : \fun \to \fun/(U(d) \times G)$ is the quotient map---is an open and dense subset of $\fun/(U(d)\times G)$. Finally, let 
\begin{linenomath*}
    \begin{align*}
        \mathcal{U}_\mathcal{I} := \bigcap_{(k,j) \in \mathcal{I}} \mathcal{U}_{k,j}
    \end{align*}
\end{linenomath*}
which is open and dense in $\fun$, making $q(\mathcal{U}_\mathcal{I})$ open and dense in $\fun/(U(d)\times G)$ as well, so it has full measure with respect to the Riemannian quotient measure. 

It was shown in \cite{shonk} that the circle actions $\phi_{k,j}$ preserve both the frame vector norms and the frame operator, so that $\phi_{k,j}$ restricts to a $U(1)$-action on $\mathcal{U}_{k,j} \subset \fun$. Moreover, it was shown that $\phi_{k,j}$ commutes with the left $U(d)$-action and the right $G$-action i.e., $A\phi_{k,j}(t,F)D = \phi_{k,j}(t,AFD)$ for each $A \in U(d)$ and $D \in G$. Consequently, $\phi_{k,j}$ descends to a well-defined action $\Tilde{\phi}_{k,j}$ on the open, dense, full measure subset $q(\mathcal{U}_{k,j}) \subset \fun/(U(d) \times G)$ given by $\Tilde{\phi}_{k,j}(t,[F]) = [\phi_{k,j}(t,F)]$.

When $(k,j) \in \mathcal{I}$, the momentum map $\Phi_{k,j}$ descends to a well-defined map on $q(\mathcal{U}_{k,j}) \subset \fun/(U(d) \times G)$. To see this, take an $F \in \mathcal{U}_{k,j}$ and let $AFD$ be an element in the $U(d) \times G$ orbit of $F$, where $A \in U(d)$ and $D \in G$. The $k^{th}$ partial frame operators of $F$ and $AFD$ are given by 
\begin{linenomath*}
    \begin{align*}
        S_k(F) = f_1f_1^* + \cdots + f_kf_k^*
    \end{align*}
\end{linenomath*}
and 
\begin{linenomath*}
    \begin{align*}
        S_k(AFD)  =  Af_1D(Af_1D)^* + \cdots + Af_kD(Af_kD)^* = Af_1f_1^*A^* + \cdots + Af_kf_k^*A^*.
    \end{align*}
\end{linenomath*}
Notice that $S_k(AFD)$ is simply $S_k(F)$ conjugated by the unitary matrix $A$, so $S_k(F)$ and $S_k(AFD)$ have identical spectra. This implies that $\Phi_{k,j}$ is $(U(d) \times G)$-invariant, so the map 
\begin{linenomath*}
    \begin{align*}
        \Tilde{\Phi}_{k,j} : q(\mathcal{U}_{k,j}) \to  \R \approx \u(1)^*, \hspace{15pt}
        [F] \mapsto \Phi_{k,j}(F)
    \end{align*}
\end{linenomath*}
is well-defined. Furthermore, $\Tilde{\Phi}_{k,j}$ serves as momentum map for the descended action, making $\Tilde{\phi}_{k,j}$ a Hamiltonian action.
\begin{theorem}
    For each $(k,j) \in \mathcal{I}$, the $U(1)$-action $\phi_{k,j}$ descends to a well defined circle action $\Tilde{\phi}_{k,j}$ on the open and dense full measure submanifold $q(\mathcal{U}_{k,j})$ of $\fun/(U(d) \times G)$. Moreover, the action is Hamiltonian and $\Tilde{\Phi}_{k,j}$ is the momentum map.
    \label{circleacts}
\end{theorem}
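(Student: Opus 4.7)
The proof naturally splits into two stages: first, verifying that $\phi_{k,j}$ descends to a well-defined circle action on $q(\mathcal{U}_{k,j})$; and second, verifying that the descent is Hamiltonian with momentum map $\Tilde{\Phi}_{k,j}$.

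For the first stage, I would directly combine the three properties of $\phi_{k,j}$ recalled in the preceding paragraph from \cite{shonk}: that it preserves frame vector norms and the frame operator (so it restricts to $\fun$), and that it commutes with both the left $U(d)$-action and the right $G$-action on $\fun$. Well-definedness of the descent is then immediate: for any $A \in U(d)$, $D \in G$, and $F \in \mathcal{U}_{k,j}$,
\begin{linenomath*}
\begin{align*}
\Tilde{\phi}_{k,j}(t, [AFD]) = [\phi_{k,j}(t, AFD)] = [A\,\phi_{k,j}(t,F)\,D] = [\phi_{k,j}(t,F)] = \Tilde{\phi}_{k,j}(t, [F]).
\end{align*}
\end{linenomath*}
That $\mathcal{U}_{k,j}$ itself is $(U(d)\times G)$-invariant---so that $q(\mathcal{U}_{k,j})$ is genuinely well-defined and inherits the open, dense, full-measure properties---follows because conjugating $S_k(F)$ by a unitary matrix preserves its spectrum, hence isolated eigenvalues remain isolated.

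For the Hamiltonian property, the well-definedness of $\Tilde{\Phi}_{k,j}$ as a map on $q(\mathcal{U}_{k,j})$ is already essentially established earlier by the computation $S_k(AFD) = A\,S_k(F)\,A^*$, which exhibits $\Phi_{k,j}$ as $(U(d)\times G)$-invariant and hence factoring through the quotient map $q$. To verify the Hamiltonian condition, I would invoke the defining property of the reduced symplectic form from \cref{MW}, namely $q^*\omega_{\text{red}} = \iota^*\omega$, where $\iota : \fun \hookrightarrow \framespace$ is the inclusion and $\omega$ is the ambient standard symplectic form. Letting $X_{k,j}$ denote the infinitesimal generator of $\phi_{k,j}$ on $\mathcal{U}_{k,j}$ and $\Tilde{X}_{k,j}$ its descent to $q(\mathcal{U}_{k,j})$---well-defined because $X_{k,j}$ is $(U(d)\times G)$-invariant and tangent to the reducing orbits---the identities $q_*X_{k,j} = \Tilde{X}_{k,j}$ and $q^*\Tilde{\Phi}_{k,j} = \iota^*\Phi_{k,j}$ together reduce the desired relation $\omega_{\text{red}}(\Tilde{X}_{k,j},\,\cdot\,) = d\Tilde{\Phi}_{k,j}$ on the quotient to its pullback $\iota^*\omega(X_{k,j},\,\cdot\,) = d\iota^*\Phi_{k,j}$, which is simply the upstairs Hamiltonian relation for $\phi_{k,j}$ restricted to $\fun$.

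The principal obstacle is bookkeeping rather than conceptual: the argument must be carried through two successive reductions---first by $U(d)$ to produce $\tight/U(d)$, then by $G$ to produce $\fun/(U(d)\times G)$---and at each stage one must check that the descended vector field and descended function remain Hamiltonian partners with respect to the correct reduced symplectic form. Because $\phi_{k,j}$ commutes with both reducing actions and $\Phi_{k,j}$ is invariant under both, this is a standard instance of \emph{reduction in stages}. The only subtlety worth flagging is that when $\gcd(N,d)\neq 1$ the second quotient is a singular symplectic stratified space; restricting to the open stratum identified in \cref{s-k} sidesteps this issue, and all of the above computations take place on the open, dense, full-measure locus $q(\mathcal{U}_{k,j})$ of that stratum.
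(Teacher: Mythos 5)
Your proposal is correct and follows essentially the same route as the paper: the paper's proof simply cites the general fact that when a product $H_1 \times H_2$ acts in a Hamiltonian way, the $H_2$-action descends to $M\sslash_{\mathcal{O}}H_1$ with momentum map characterized by $q^*\Tilde{\mu}_2 = \iota^*\mu_2$, applied with $H_1 = U(d)\times G$ and $H_2 = U(1)$. You have merely unwound that same fact explicitly---commutativity for well-definedness, invariance of $\mathcal{U}_{k,j}$, and the pullback argument via $q^*\omega_{\text{red}} = \iota^*\omega$---so no further comparison is needed.
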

\begin{proof}
This follows from a more general fact about Hamiltonian actions. If the product Lie group $H_1 \times H_2$ acts on a symplectic manifold $(M,\omega)$ in a Hamiltonian way with momentum map $(\mu_1, \mu_2): M \to \mathfrak{h}_1^* \times \mathfrak{h}_2^*$, then $H_1$ and $H_2$ individually act on $M$ and the two actions commute. Moreover, for any coadjoint orbit $\mathcal{O} \subset \mathfrak{h}_1^*$, the $H_2$ action is $H_1$-invariant on $M\sslash_\mathcal{O} H_1$ so the $H_2$-action descends to a well defined action on the quotient $M\sslash_{\mathcal{O}} H_1$. The descended action is Hamiltonian with momentum map $\Tilde{\mu}_2$ characterized by $q^* \Tilde{\mu}_2 = \iota^* \mu_2$ where $q: \mu_1^{-1}(\mathcal{O}) \to M\sslash_\mathcal{O} H_1$ is the quotient map and $\iota: \mu_1^{-1}(\mathcal{O}) \to M$ is the inclusion map. Then setting $M = \framespace$, $H_1 = U(d)\times G$, and $H_2 = U(1)$ gives the desired result. 
\end{proof}
\begin{theorem}
 The torus $U(1)^{d_T}$ of dimension $d_T := (d-1)(N-d-1)$ acts on the open, dense, and full measure submanifold $q(\mathcal{U}_{\mathcal{I}}) \subset \fun/(U(d) \times G)$ as the product of the circle actions $\phi = \bigtimes_{(k,j) \in \mathcal{I}}{\Tilde{\phi}_{k,j}}$. The action is Hamiltonian and $\Phi: \mathcal{U}_\mathcal{I} \to \R^{d_T} \approx (\u(1)^{d_T})^*$ given by $[F] \mapsto (\mu_{k,j}(F))_{(k,j)\in \mathcal{I}}$ is the momentum map. Since
 \begin{linenomath*}
     \begin{align*}
         d_T = \frac{1}{2} \text{ dim } \fun / (U(d) \times G) = \frac{1}{2} \text{ dim } \mathcal{U}_\mathcal{I}, 
     \end{align*}
 \end{linenomath*} 
$\mathcal{U}_{\mathcal{I}}$ is a toric symplectic manifold.
\end{theorem}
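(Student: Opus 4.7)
The plan is to combine the individual circle actions from \cref{circleacts} into a single Hamiltonian torus action, with the essential technical point being pairwise commutativity of these actions.

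First I observe that $q(\mathcal{U}_\mathcal{I}) = \bigcap_{(k,j) \in \mathcal{I}} q(\mathcal{U}_{k,j})$ is a finite intersection of open dense subsets of $\fun/(U(d) \times G)$, hence itself open and dense, so it has full measure with respect to the Riemannian quotient measure. On this common domain, each $\Tilde{\phi}_{k,j}$ restricts to a Hamiltonian circle action with momentum map $\Tilde{\Phi}_{k,j}([F]) = \mu_{k,j}(F)$ by \cref{circleacts}.

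The heart of the argument is to prove pairwise commutativity of the $d_T$ circle actions. Since each action is Hamiltonian, this is equivalent to showing that the Poisson brackets $\{\Tilde{\Phi}_{k,j}, \Tilde{\Phi}_{k',j'}\}$ vanish for all pairs in $\mathcal{I}$. Under the symplectomorphism \eqref{Symplectomorphism} identifying $\tight/U(d)$ with the coadjoint orbit ${\mathcal{O}}_{N/d} \subset \H(N)$, the eigenstep $\mu_{k,j}$ corresponds to the $j$th largest eigenvalue of the leading $k \times k$ principal submatrix of the Hermitian matrix $F^*F$. These are precisely the \emph{Gelfand--Tsetlin functions} on the $U(N)$-coadjoint orbit ${\mathcal{O}}_{N/d}$, and a classical theorem of Guillemin and Sternberg establishes that they pairwise Poisson commute on the open dense subset of ${\mathcal{O}}_{N/d}$ on which all of the relevant principal-block eigenvalues are simple. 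Since each $\mu_{k,j}$ is $G$-invariant (the action of $G$ on ${\mathcal{O}}_{N/d}$ is by conjugation with diagonal unitary matrices, which preserves the spectrum of every leading principal block), these Poisson brackets descend unchanged to the second symplectic reduction $\fun/(U(d)\times G)$, yielding $\{\Tilde{\Phi}_{k,j}, \Tilde{\Phi}_{k',j'}\} = 0$ on $q(\mathcal{U}_\mathcal{I})$.

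With commutativity in hand, the product $\phi = \bigtimes_{(k,j)\in \mathcal{I}} \Tilde{\phi}_{k,j}$ is a well-defined $U(1)^{d_T}$-action on $q(\mathcal{U}_\mathcal{I})$, and it is Hamiltonian with momentum map given by the tuple $\Phi([F]) = (\mu_{k,j}(F))_{(k,j) \in \mathcal{I}}$, since the momentum map of a product of commuting Hamiltonian actions is the product of the individual momentum maps. The toric conclusion then follows immediately from the dimension count established at the end of the previous section: $\dim q(\mathcal{U}_\mathcal{I}) = \dim \fun/(U(d)\times G) = 2(d-1)(N-d-1) = 2d_T$, so the torus dimension is exactly half the symplectic dimension.

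The principal obstacle is the commutativity step. Absent an appeal to the Gelfand--Tsetlin integrability theorem, a direct verification would require explicit computation of $\omega$ applied to the Hamiltonian vector fields generated by \eqref{phikj}, which is awkward because the generating projectors $u_{k,j}u_{k,j}^*$ depend implicitly on $F$ through the spectral decomposition of $S_k$ and change in nontrivial ways under the flows of $\Tilde{\phi}_{k',j'}$ for $k' < k$.
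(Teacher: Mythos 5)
Your proposal is correct and follows essentially the same route as the paper: the paper's proof simply cites \cite{shonk} for the pairwise commutativity of the circle actions and notes $|\mathcal{I}| = d_T$, whereas you correctly unpack where that commutativity comes from, namely the Guillemin--Sternberg Gelfand--Tsetlin system on the coadjoint orbit $\mathcal{O}_{N/d}$ together with the descent of Poisson brackets of $G$-invariant functions through the second reduction. The identification of $\mu_{k,j}$ with the $j$th eigenvalue of the leading $k\times k$ block of $F^*F$ and the concluding dimension count are both accurate, so no gap remains.
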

\begin{proof}
    As shown in \cite{shonk}, for each $(k,j), (l,m) \in \mathcal{I}$ the circle actions $\Tilde{\phi}_{kj}$ and $\Tilde{\phi}_{kj}$ commute; i.e., 
    \begin{linenomath*}
    \begin{align*}
        \Tilde{\phi}_{kj}(t, \Tilde{\phi}_{lm}(s,[F])) = \Tilde{\phi}_{lm}(s, \Tilde{\phi}_{kj}(t,[F])).
    \end{align*}
    \end{linenomath*}
    Additionally, it is easy to see that the cardinality of $\mathcal{I}$ is $d_T$. 
\end{proof}
Let $\mathcal{P}_{d,N} := \Phi(\mathcal{U}_\mathcal{I})$ be the moment polytope of the Hamiltonian toric action on $\mathcal{U}_\mathcal{I}$. Then $\overline{\mathcal{P}}_{d,N} := \text{cl }\mathcal{P}_{d,N}$ is the convex polytope in $\R^{d_T}$ consisting of all vectors of independent eigensteps of FUNTFs; i.e., 
\begin{linenomath*}
    \begin{align*}
        \overline{\mathcal{P}}_{d,N} = \{ (\mu_{k,j}(F))_{(k,j) \in \mathcal{I}} : F \in \fun \}.
    \end{align*}
\end{linenomath*}
This polytope is defined by Weyl's inequalities and the row sum constraints, as discussed in \cref{fig:eigensteps,fig:ind steps}. For general $d$ and $N$, there is no straightforward way to explicitly list all the defining inequalities for $\overline{\mathcal{P}}_{d,N}$; however, they can be generated algorithmically with relative ease. Our proof-of-concept implementation, available on GitHub~\cite{sampler}, demonstrates this process. The defining inequalities of $\mathcal{P}_{3,5}$ were explicitly worked out in \cref{35example}.

Notice, given an $[F] \in \mathcal{U}_\mathcal{I}$ we can reconstruct the complete eigenstep table of $F$ from $\Phi([F])$ using the fact that $\sum_{j = 1}^{\text{min}\{k,d\}} \mu_{k,j}(F) = k$ and  $\mu_{k,j}(F) = N/d$ whenever $k = N-d + 1, \dots, N$ and $j = 1, \dots, k-(N -d)$. For example, if $F \in \mathcal{F}^{3,5}_{5/3}(1)$ and $\Phi([F]) = (x_1,x_2) \in \overline{\mathcal{P}}_{3,5}$ then the complete eigenstep table of $F$ can be constructed by plugging $x_1$ and $x_2$ into the table of Example $\ref{35example}$. 

We have established that \( \fun/(U(d) \times G) \) is a toric Kähler manifold that admits the Hamiltonian action \( \phi \) with moment map \( \Phi \) and moment polytope \( \mathcal{P}_{d,N} \). Furthermore, following the procedure described in Cahill et al.~\cite[Theorem~7]{fickus}—choosing the identity matrix whenever a choice arises in their algorithm—we define the function \verb|lift_to_fiber|. This function maps a point \( \mu \in \mathcal{P}_{d,N} \) to a representative \( F \) of the unitary equivalence class \( [F] \in \fun/(U(d) \times G) \) such that \( \Tilde{\Phi}([F]) = \mu \). In other words, \verb|lift_to_fiber| serves as a rough section of the \( U(1)^n \) bundle \( \fun/(U(d) \times G) \). Using these components in \cref{alg:samp} yields our \eigenlift algorithm, stated below as \cref{alg:sampling FUNTF}. 

    \begin{algorithm}
        \caption{Eigenlift}\label{alg:sampling FUNTF}
    \begin{algorithmic}[1]
        \Require $d + 1 < N$
        \State $ \mu \sim \text{Uniform}(\overline{\mathcal{P}}_{d,N}, \lambda)$
        \State $F \gets \verb|lift_to_fiber|(\mu)$ defined using Theorem 7 in \cite{fickus}
        \State $\theta \sim \text{Uniform}(U(1)^n, \nu)$
        \State $F_{new} \gets \phi(\theta,F)$
        \State \Return $[F_{new}]$
    \end{algorithmic}
    \end{algorithm}

\begin{corollary}
    For any $d, N \in \N$ with $d + 1  < N$, \cref{alg:sampling FUNTF} produces a uniformly random $[F] \in \fun/(U(d) \times G)$ with respect to the Riemannian quotient volume measure. Moreover, the representative $F$ is full-spark with probability 1. 
\end{corollary}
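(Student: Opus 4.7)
My plan is to obtain the corollary as a direct consequence of \cref{sampling uniform} and \cref{full-spark}, with the preceding two theorems supplying all the structural hypotheses. In effect, I would argue that \cref{alg:sampling FUNTF} is literally an instance of the abstract sampling scheme \cref{alg:samp} applied to the toric K\"ahler manifold $\mathcal{U}_\mathcal{I} \subset \fun/(U(d) \times G)$, and then push the resulting sample through the various identifications.

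The key step is to verify the hypotheses of \cref{sampling uniform}. By the immediately preceding theorem, $\mathcal{U}_\mathcal{I}$ is a toric symplectic manifold of real dimension $2d_T$ with Hamiltonian $U(1)^{d_T}$-action $\phi$ and momentum map $\Phi$; its moment polytope is $\overline{\mathcal{P}}_{d,N}$ and the open stratum $\mathcal{P}_{d,N} = \Phi(\mathcal{U}_\mathcal{I})$ has full Lebesgue measure in $\overline{\mathcal{P}}_{d,N}$, so the first line of \cref{alg:sampling FUNTF} almost surely produces $\mu \in \mathcal{P}_{d,N}$. The function \verb|lift_to_fiber| is a rough section of $\Phi$ in the sense of \cref{def:rough section}: the Cahill et al.~\cite[Theorem~7]{fickus} construction produces, from prescribed eigensteps $\mu$ together with a fixed choice (the identity) at each branching step, a frame $F$ whose eigensteps are exactly $\mu$, so $\Phi([F]) = \mu$ by construction. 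With the third step sampling $\theta$ from Haar measure on the torus, \cref{sampling uniform} then tells us that $[F_{new}]$ is distributed according to the symplectic measure $m_\omega$ on $\mathcal{U}_\mathcal{I}$.

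To convert this to the desired Riemannian statement, I would invoke K\"ahler reduction: since $\C^{d \times N}$ is K\"ahler and the $U(d)$ and $G$ actions are holomorphic isometries, the quotient $\fun/(U(d) \times G)$ inherits a K\"ahler structure whose symplectic measure agrees with the Riemannian quotient volume measure~\cite{nigel}. Because $\mathcal{U}_\mathcal{I}$ is open, dense, and of full Riemannian quotient measure in $\fun/(U(d) \times G)$, extending $m_\omega$ by zero over its complement yields the uniform distribution on the full quotient. For the second assertion, note that full-sparkness is $(U(d) \times G)$-invariant and hence descends to a well-defined subset of the quotient, and by \cref{full-spark} the full-spark locus has full Hausdorff measure in $\fun$. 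Pushing forward under the Riemannian submersion $\fun \to \fun/(U(d) \times G)$, this full-measure property descends to the quotient, so $[F_{new}]$ almost surely lies in the full-spark locus and hence so does its representative $F_{new}$.

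The main technical obstacle is verifying the rough-section property of \verb|lift_to_fiber|; this is the only place where new content beyond the earlier theorems is genuinely used. It requires a careful reading of the Cahill--Fickus construction to confirm that fixing the identity at every branching point produces a representative of $[F] \in \Phi^{-1}(\mu)$ for every $\mu \in \mathcal{P}_{d,N}$, with no hidden failure cases arising from degenerate spectra along the recursion. Once this has been checked, the corollary is little more than an assembly of \cref{sampling uniform}, \cref{full-spark}, and the K\"ahler identification of measures.
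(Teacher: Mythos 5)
Your proposal is correct and takes essentially the same route as the paper: the paper's proof is literally the two-sentence observation that the first claim follows from \cref{sampling uniform} and the second from \cref{full-spark}, with all the structural hypotheses (the toric Kähler structure on $q(\mathcal{U}_\mathcal{I})$, the rough-section property of \verb|lift_to_fiber| via Cahill et al., and the agreement of symplectic and Riemannian quotient measures) established in the surrounding text exactly as you verify them. Your write-up simply makes explicit the hypothesis-checking that the paper leaves implicit.
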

    
\begin{proof}
    The first sentence follows from applying \cref{sampling uniform}, and the second sentence is a consequence of \cref{full-spark}. 
\end{proof}

\section{Implementation}\label{sec:implementation}
As a proof of concept, we implemented this algorithm in Python. The implementation supports any values of $d$ and $N$, provided that $N > d + 1$. However, for large $d$ and $N$, our simple implementation becomes intractable due to computational complexity. It is well known in the literature that uniformly sampling a convex polytope is a challenging problem, and our implementation reflects this difficulty.  

The computational complexity of \cref{alg:sampling FUNTF} is primarily dictated by the first step: sampling $\mu$ from $\overline{\mathcal{P}}_{d,N}$. We attempted to use the standard hit-and-run algorithm~\cite{hnr,bonehConstraintsRedundancyFeasible1979,andersenHitRunUnifying2007,smithEfficientMonteCarlo1984}, which is a Markov chain that converges to Lebesgue measure on any convex polytope. However, it performed poorly, exhibiting extremely slow convergence on our polytope $\overline{\mathcal{P}}_{d,N}$. 

For the purpose of this paper, our goal is to demonstrate a proof of concept in low dimensions. Given this, a reasonable approach is to use rejection sampling within the bounding box of $\overline{\mathcal{P}}_{d,N}$. However, for the algorithm to be practical with larger values of $d$ and $N$, a new sampling paradigm must be developed for generating random independent eigensteps. Note that the Top Kill algorithm~\cite{fickusConstructingAllSelfadjoint2013} samples eigensteps from a measure on $\overline{\mathcal{P}}_{d,N}$ which is absolutely continuous with respect to Lebesgue measure, but not from Lebesgues masure itself.

In our proof-of-concept experiment, we sampled \(10^6\) uniformly random unitary classes of FUNTFs from \(\mathcal{F}_N / (U(d) \times G)\) using Algorithm~\ref{alg:sampling FUNTF}. We then recorded the coherence, defined for a unit norm frame  $F$ as \( \text{Coh}(F) := \max_{i \neq j} |\langle f_i, f_j \rangle|\), of the representative in a histogram. The results are shown in \cref{fig:coherences}.

\begin{figure}[H]
    \centering
    \adjustbox{max width=\textwidth}{%
    \begin{tabular}{ccc}
        \includegraphics[width=0.3\textwidth]{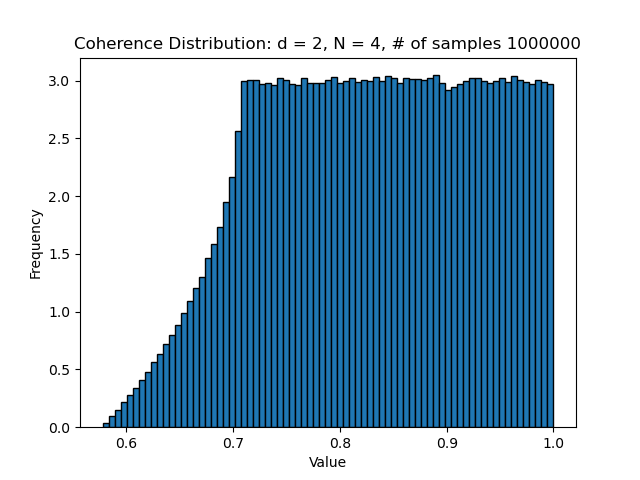} &
        \includegraphics[width=0.3\textwidth]{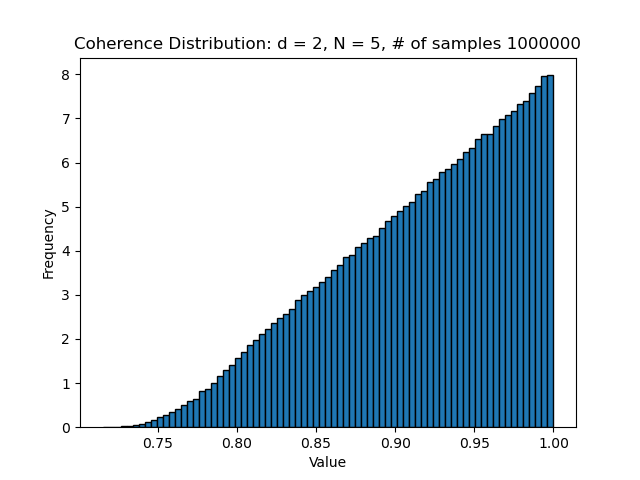} &
        \includegraphics[width=0.3\textwidth]{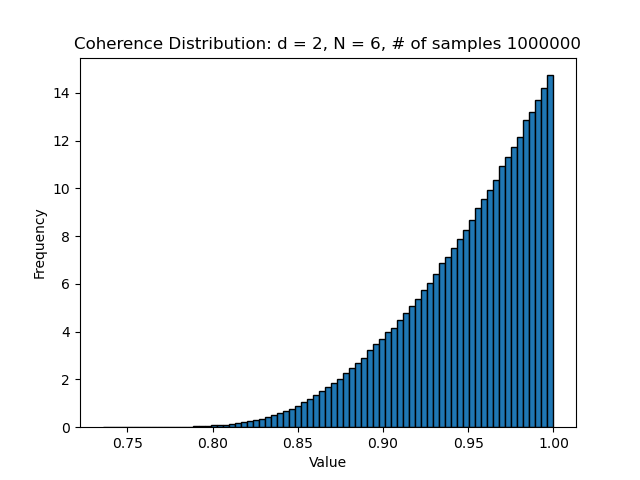} \\
        \includegraphics[width=0.3\textwidth]{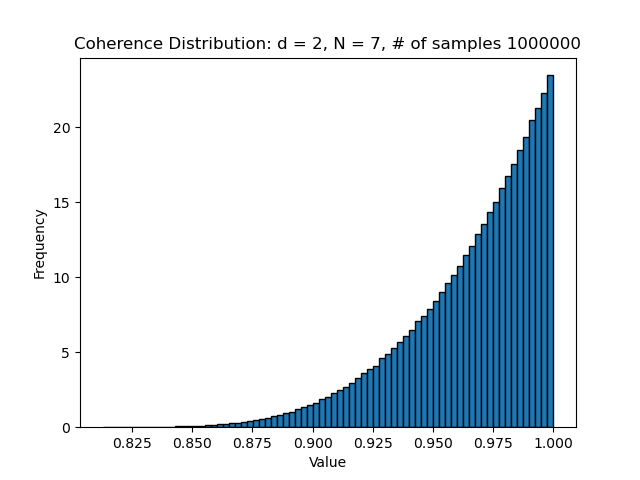} &
        \includegraphics[width=0.3\textwidth]{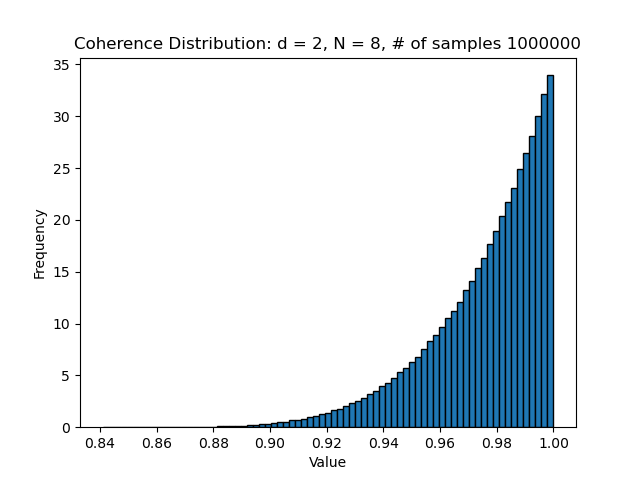} &
        \includegraphics[width=0.3\textwidth]{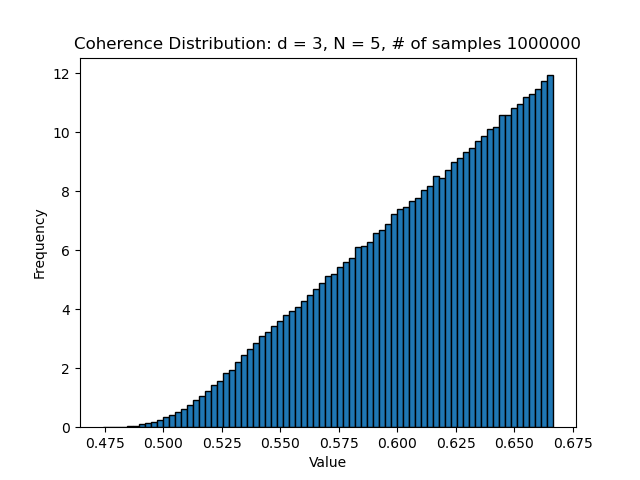} \\
        \includegraphics[width=0.3\textwidth]{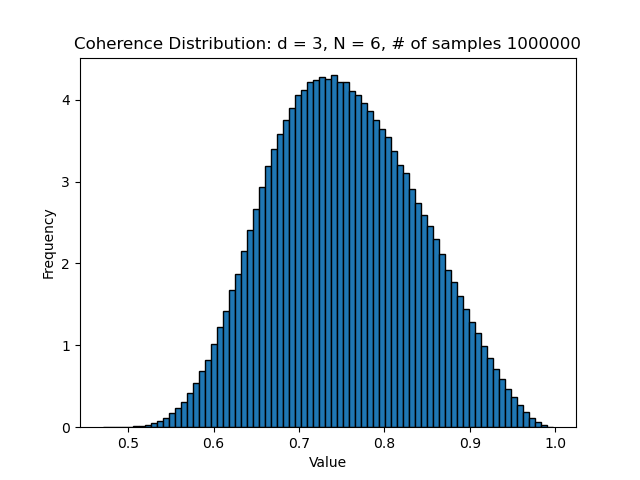} & 
        \includegraphics[width=0.3\textwidth]{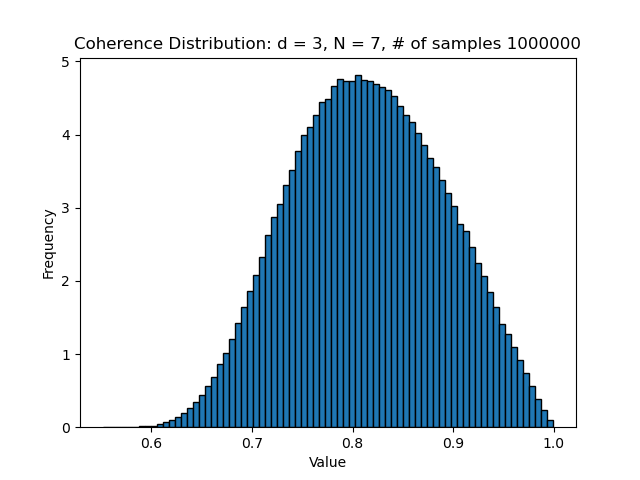} &
        \includegraphics[width=0.3\textwidth]{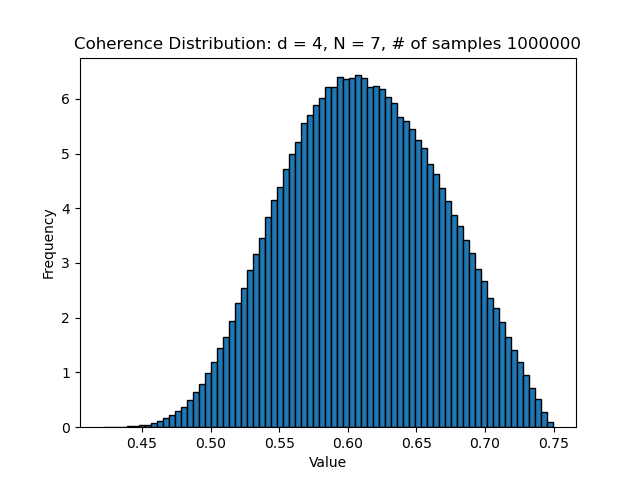}
    \end{tabular}
    }
    \caption{Distribution of coherences of representatives of random unitary classes FUNTF's generated using Algorithm~\ref{alg:sampling FUNTF}.}
    \label{fig:coherences}
\end{figure}

We note that these coherence values lie within the known bounds listed in \cite{sloan} and the distributions for all $d=2$ are consistent with those shown in \cite{conference}. Moreover, in the cases where \( d = 3, N = 5 \) and \( d = 4, N = 7 \), the maximum coherence is not 1, but rather \( \frac{N-d}{d} \). This observation led us to the following proposition. While this fact may be familiar to experts, we have not encountered it in the literature.  

\begin{proposition}
   For any $F \in \fun$, the coherence of $F$ is bounded above by $\min \{1, (N-d)/d\}$.
\end{proposition}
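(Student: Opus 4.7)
The plan is to bound $|\langle f_i, f_j\rangle|$ by two different arguments and take the minimum. The trivial upper bound of $1$ is immediate from Cauchy--Schwarz, since each $f_i$ is a unit vector. The content of the proposition is therefore the bound $(N-d)/d$, which is only informative when $N < 2d$.

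The key observation is that for a fixed pair of distinct indices $i \neq j$, the rank-$2$ operator
\begin{linenomath*}
\begin{align*}
    S_{ij} := f_if_i^* + f_jf_j^*
\end{align*}
\end{linenomath*}
has nonzero eigenvalues $1 \pm |\langle f_i, f_j\rangle|$. This can be seen from the Gram matrix $\begin{pmatrix} 1 & \langle f_i, f_j\rangle \\ \overline{\langle f_i, f_j\rangle} & 1\end{pmatrix}$ of $\{f_i, f_j\}$, which shares its nonzero spectrum with $S_{ij}$. So the largest eigenvalue of $S_{ij}$ is exactly $1 + |\langle f_i, f_j\rangle|$.

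Now I would compare $S_{ij}$ to the full frame operator $FF^* = (N/d)\mathbb{I}_d$. The difference
\begin{linenomath*}
\begin{align*}
    FF^* - S_{ij} = \sum_{k \neq i,j} f_k f_k^*
\end{align*}
\end{linenomath*}
is a sum of positive semidefinite rank-$1$ matrices, hence positive semidefinite. Therefore $S_{ij} \preceq (N/d)\mathbb{I}_d$, which forces $\lambda_{\max}(S_{ij}) \leq N/d$. Substituting the expression for $\lambda_{\max}(S_{ij})$ gives $1 + |\langle f_i, f_j\rangle| \leq N/d$, i.e., $|\langle f_i, f_j\rangle| \leq (N-d)/d$. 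Taking the maximum over $i \neq j$ and combining with the Cauchy--Schwarz bound yields the claim.

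There is essentially no obstacle here; the main conceptual content is recognizing that Weyl's inequalities (or, equivalently, the PSD ordering) can be applied directly to the partial sum $f_if_i^* + f_jf_j^*$ rather than to the sequential partial frame operators $S_k$. One could instead argue via the eigensteps: after permuting so that $f_i, f_j$ come first (which preserves the FUNTF property), the relation $\mu_{2,1} \leq \mu_{N,1} = N/d$ follows from the monotonicity of the first column in Figure~\ref{fig:eigensteps}, and $\mu_{2,1} = 1 + |\langle f_i, f_j\rangle|$ gives the same conclusion. The PSD version is cleaner because it avoids any appeal to reordering.
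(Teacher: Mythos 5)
Your proof is correct and follows essentially the same route as the paper: both identify $1+|\langle f_i,f_j\rangle|$ as the top eigenvalue of the two-vector partial frame operator and bound it by $N/d$. The only difference is cosmetic---the paper reorders so that $f_i,f_j$ come first and invokes monotonicity of the first column of the eigenstep table, whereas you deduce $S_{ij}\preceq (N/d)\mathbb{I}_d$ directly from positive semidefiniteness of the remaining rank-one summands, which cleanly sidesteps the reordering.
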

\begin{proof}
    Let $F \in \fun/(U(d) \times G)$. If $N \geq 2d$, then $\min\{(N-d)/d, 1\} = 1$. Since each frame vector of $F$ has squared norm equal to $1$, this provides the trivial bound on $\text{Coh}(F)$. Moreover, this bound is sharp: when $N \geq 2d$, it is possible to fill in the eigenstep array from Figure~\ref{fig:eigensteps} starting with $\mu_{1,1} = 1$, $\mu_{2,1} = 2$, so there is a frame having these eigensteps; for example, when $N=2d$ we can simply concatenate two copies of the same orthonormal basis. This choice of eigensteps corresponds to the situation when the first two frame vectors lie in the same line, and hence $\text{Coh}(F) = 1$.
    
    It remains to consider the case $N < 2d$, when $\min\{(N-d)/d,1\} = (N-d)/d$. Without loss of generality, assume that $\max_{i \neq j} |\<f_i,f_j\>|$ is attained by the first two frame vectors $f_1$ and $f_2$ i.e., $\text{Coh}(F) = |\<f_1, f_2\>|$. Recall that the eigensteps $\mu_{2,1}$ and $\mu_{2,2}$ are defined to be the eigenvalues of $S_2 = f_1f_1^* + f_2f_2^*$. These are also the eigenvalues of the upper left $2 \times 2$ minor of the Gram matrix $F^*F$, given by
    \begin{linenomath*}
    \begin{align*}
        [F^*F]_M := \begin{bmatrix}
            1 & \<f_1,f_2\> \\
            \<f_2, f_1\> & 1
        \end{bmatrix}.
    \end{align*}
    \end{linenomath*}
     The characteristic polynomial of $[F^*F]_M$ is
     \begin{linenomath*}
    \begin{align*}
        \chi_{[F^*F]_M}(\lambda) = \lambda^2 - 2\lambda + 1 - |\<f_1,f_2\>|^2.
    \end{align*}
    \end{linenomath*}
    Solving for its roots, the larger eigenvalue is
    \begin{linenomath*}
    \begin{align*}
        \mu_{2,1} = 1 + |\<f_1,f_2\>|.
    \end{align*}
    \end{linenomath*}
    From our work done in Figure~\ref{fig:eigensteps}, we know that $\mu_{2,1} \leq \mu_{N,1} = N/d$. Thus, we obtain
    \begin{linenomath*}
    \begin{align*}
        1 + |\<f_1,f_2\>| \leq \frac{N}{d}.
    \end{align*}
    \end{linenomath*}
    After rearranging, we arrive at the desired bound
    \begin{align*}
        \text{Coh}(F) = |\<f_1,f_2\>| \leq \frac{N-d}{d}.
    \end{align*}
\end{proof}

For \( d = 3 \) and \( N = 5 \), the fiber in \( \mathcal{F}_{5/3}^{3,5}(\boldsymbol{1})/(U(d) \times G) \) over a regular value of the momentum map is diffeomorphic to the two-torus \( U(1)^2 \). Thus, we are able to  visualize the coherence values on a fiber. To do so, we sample a regular value \( \mu \in \mathcal{P}_{3,5} \), and proceed as follows: first, we generate a mesh grid on \( [0,2\pi)^2 \); next, we map the grid onto frame space using \( \phi(\,\cdot\,, \Phi^{-1}(p)) : U(1)^2 \to\mathcal{F}_{5/3}^{3,5}(\boldsymbol{1})/(U(d) \times G) \); then, we compute the coherence at each grid point; and finally, we present the results as a heat map. The heat maps corresponding to two regular values are shown in \cref{fig: coh on fiber}.
\begin{figure}[h]
   \centering
    \includegraphics[width=0.45\linewidth]{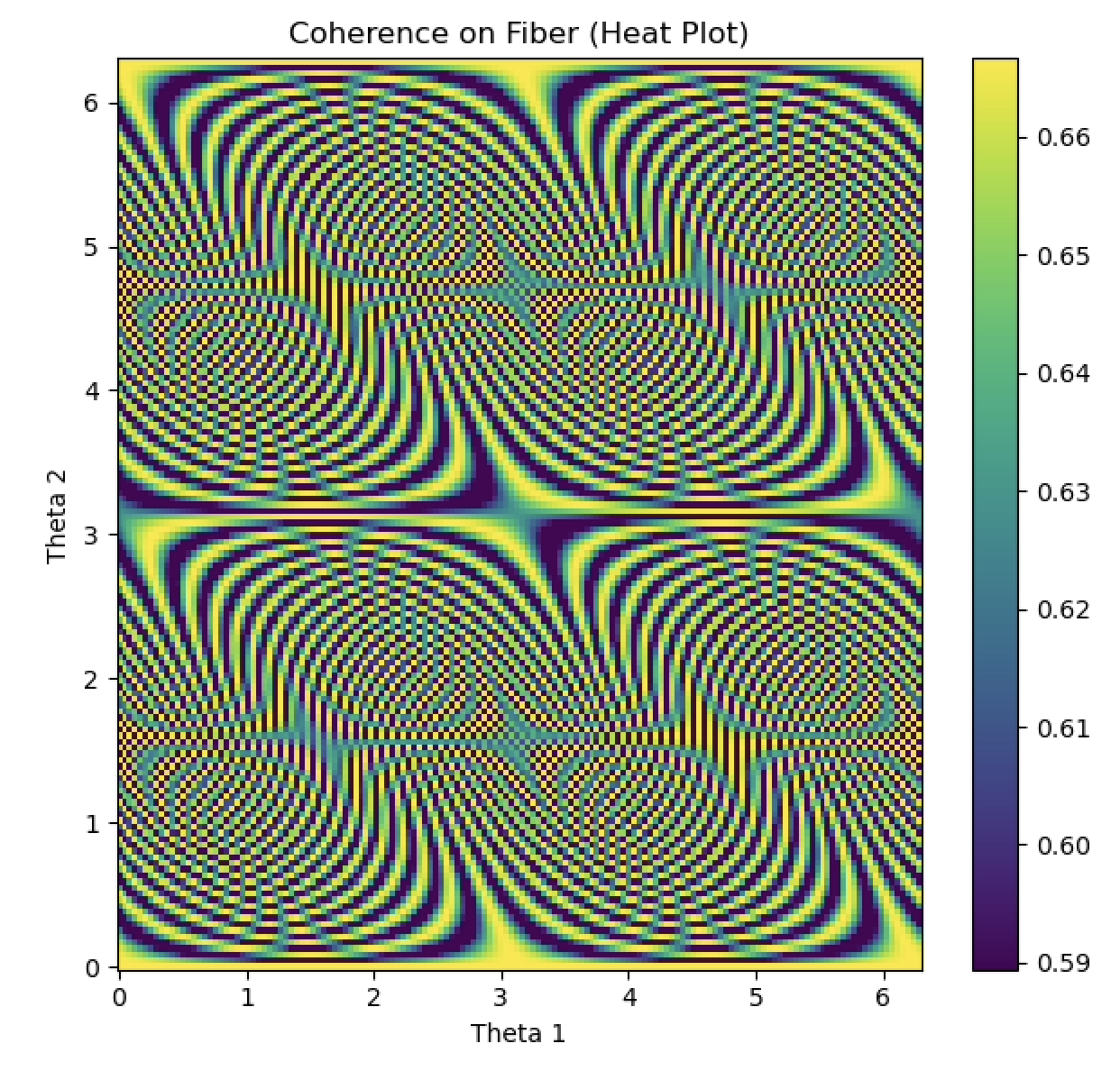}
    \includegraphics[width=0.45\linewidth]{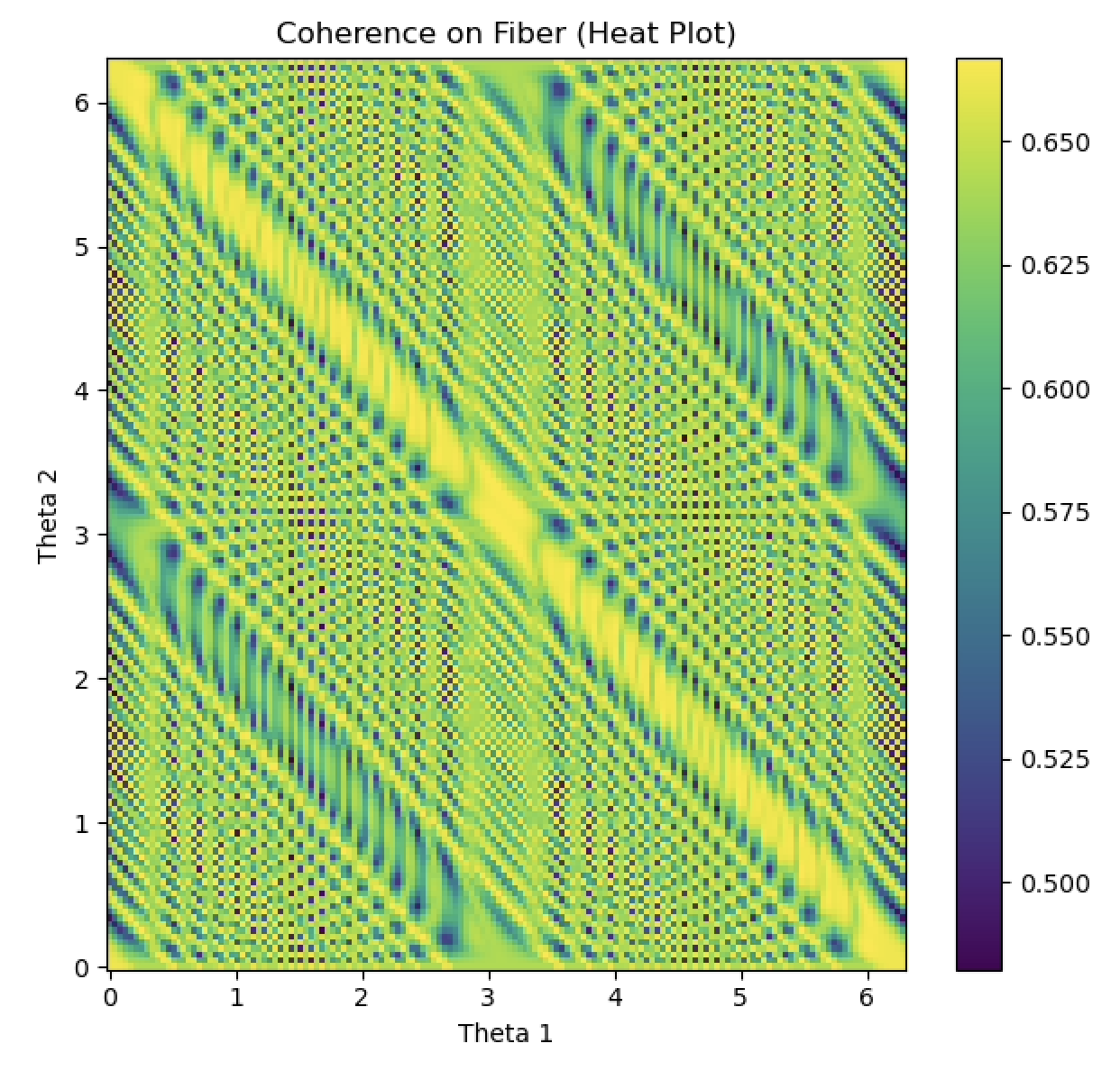}
    \caption{Coherence values on the fibers of the momentum map $\Phi : \mathcal{F}_{5/3}^{3,5}(\boldsymbol{1})/(U(d) \times G) \to \mathcal{P}_{3,5}$ over the two regular values $(1.589, 1.009)$ and $(1.361, 0.711)$ corresponding to the plots on the left and right, respectively. Note that the full eigenstep table for frames on these fibers can be found by plugging the regular values in for $x_1$ and $x_2$ in the table in Example~\ref{35example}.}
    \label{fig: coh on fiber}
\end{figure}

\section{Discussion}
\label{sec:conclusion}

In order to efficiently sample higher-dimensional FUNTF spaces, we need a better approach to sampling the eigenstep polytope than either rejection sampling (which seems to scale exponentially) or hit-and-run (which seems to converge too slowly to be practical). When $d=2$ there is a volume-preserving affine transformation $\overline{\mathcal{P}}_{2,N} \to [-1,1]^{d_T}$~\cite{cantarellaFastDirectSampling2016}; since $\frac{\vol \overline{\mathcal{P}}_{2,N}}{\vol [-1,1]^{d_T}} = O(N^{-3/2})$, rejection sampling becomes feasible even for very large $N$, and this gives a sampling algorithm for length-$N$ FUNTFs in $\C^2$ with expected runtime $\Theta(N^{5/2})$~\cite{conference}.\footnote{In fact, by more recent work~\cite{cantarellaFasterDirectSampling2024}, this can be speeded up to $\Theta(N^2)$.} Does such an affine tranformation exist for larger $d$? And what is the (asymptotic) volume of $\overline{\mathcal{P}}_{d,N}$?

Given some such way of speeding up the eigenstep sampling part of \eigenlift, it would then be interesting to perform numerical experiments on the distributions of various statistics of interest on FUNTFs. For example, what can be said about the distribution of coherence? The family of distributions observed in \cref{fig:coherences} seems hard to characterize without more data.

Haikin, Zamir, and Gavish~\cite{haikinRandomSubsetsStructured2017} have observed that for many classes of FUNTFs, the spectra of random submatrices follow a MANOVA distribution (see also~\cite{farrellLimitingEmpiricalSingular2011,edelmanBetaJacobiMatrixModel2008}). This, in turn, implies that these frames have better properties for applications like compressed sensing than random Gaussian frames. It seems quite reasonable to guess that the same is true for FUNTFs as a whole,\footnote{We're not sure to whom we ought to attribute this idea, but we first heard it from Dustin Mixon.} and this would be an interesting conjecture to test experimentally.

Finally, there is nothing particularly special about FUNTFs in our development of \eigenlift. Analogous algorithms should exist for complex frames with \emph{any} prescribed spectrum of the frame operator and \emph{any} prescribed frame vector norms. The quotient of such frame spaces by $U(d) \times G$ always contains an dense open subset which is toric, so \cref{sampling uniform} still applies.

\subsection*{Acknowledgments}

Thanks to Jason Cantarella, Tom Needham, Chris Manon, Emily King, and Dustin Mixon, for ongoing conversations about frames, symplectic geometry, and related topics. This work was supported by the National Science Foundation (DMS–2107700).

\bibliography{ref1}

\begin{thebibliography}{10}

\bibitem{andersenHitRunUnifying2007}
Hans~C. Andersen and Persi Diaconis.
\newblock Hit and run as a unifying device.
\newblock {\em Journal de la Soci{\'e}t{\'e} Française de Statistique \& Revue
  de Statistique Appliqu{\'e}e}, 148(4):5--28, 2007.
\newblock \href {https://www.numdam.org/item/JSFS_2007__148_4_5_0/}
  {\path{Numdam:JSFS_2007__148_4_5_0}}.

\bibitem{atiyahConvexityCommutingHamiltonians1982}
Michael~Francis Atiyah.
\newblock Convexity and commuting {{Hamiltonians}}.
\newblock {\em Bulletin of the London Mathematical Society}, 14(1):1--15, 1982.
\newblock \href {https://doi.org/10.1112/blms/14.1.1}
  {\path{doi:10.1112/blms/14.1.1}}.

\bibitem{benedettoFiniteNormalizedTight2003}
John~J. Benedetto and Matthew Fickus.
\newblock Finite normalized tight frames.
\newblock {\em Advances in Computational Mathematics}, 18(2--4):357--385, 2003.
\newblock \href {https://doi.org/10.1023/A:1021323312367}
  {\path{doi:10.1023/A:1021323312367}}.

\bibitem{bonehConstraintsRedundancyFeasible1979}
Arnon Boneh and A.~Golan.
\newblock Constraints redundancy and feasible region boundedness by random
  feasible point generator ({{RGPG}}).
\newblock In {\em Third {{European Congress}} on {{Operations Research}} -
  {{EURO III}}}, Amsterdam, 1979. European Congress on Operations Research.

\bibitem{hnr}
Claude J.~P. Bélisle, H.~Edwin Romeijn, and Robert~L. Smith.
\newblock Hit-and-run algorithms for generating multivariate distributions.
\newblock {\em Mathematics of Operations Research}, 18(2):255--266, 1993.
\newblock \href {https://doi.org/10.1287/moor.18.2.255}
  {\path{doi:10.1287/moor.18.2.255}}.

\bibitem{fickus}
Jameson Cahill, Matthew Fickus, Dustin~G. Mixon, Miriam~J. Poteet, and Nate
  Strawn.
\newblock Constructing finite frames of a given spectrum and set of lengths.
\newblock {\em Applied and Computational Harmonic Analysis}, 35(1):52--73,
  2013.
\newblock \href {https://doi.org/10.1016/j.acha.2012.08.001}
  {\path{doi:10.1016/j.acha.2012.08.001}}.

\bibitem{cantarellaFastDirectSampling2016}
Jason Cantarella, Bertrand Duplantier, Clayton Shonkwiler, and Erica Uehara.
\newblock A fast direct sampling algorithm for equilateral closed polygons.
\newblock {\em Journal of Physics A: Mathematical and Theoretical},
  49(27):275202, 2016.
\newblock \href {https://doi.org/10.1088/1751-8113/49/27/275202}
  {\path{doi:10.1088/1751-8113/49/27/275202}}.

\bibitem{cantarellaFasterDirectSampling2024}
Jason Cantarella, Henrik Schumacher, and Clayton Shonkwiler.
\newblock A faster direct sampling algorithm for equilateral closed polygons
  and the probability of knotting.
\newblock {\em Journal of Physics A: Mathematical and Theoretical},
  57(28):285205, 2024.
\newblock \href {https://doi.org/10.1088/1751-8121/ad54a8}
  {\path{doi:10.1088/1751-8121/ad54a8}}.

\bibitem{cantarellaSymplecticGeometryClosed2016}
Jason Cantarella and Clayton Shonkwiler.
\newblock The symplectic geometry of closed equilateral random walks in
  $3$-space.
\newblock {\em The Annals of Applied Probability}, 26(1):549--596, 2016.
\newblock \href {https://doi.org/10.1214/15-AAP1100}
  {\path{doi:10.1214/15-AAP1100}}.

\bibitem{app1}
Peter~G. Casazza and Jelena Kova{\v{c}}evi{\'c}.
\newblock Equal-norm tight frames with erasures.
\newblock {\em Advances in Computational Mathematics}, 18(2-4):387--430, 2003.
\newblock \href {https://doi.org/10.1023/A:1021349819855}
  {\path{doi:10.1023/A:1021349819855}}.

\bibitem{casa}
Peter~G. Casazza and Manuel~T. Leon.
\newblock Existence and construction of finite frames with a given frame
  operator.
\newblock {\em International Journal of Pure and Applied Mathematics},
  63(2):149--157, 2010.

\bibitem{da_Silva}
Ana~Cannas da~Silva.
\newblock {\em Lectures on Symplectic Geometry}.
\newblock Number 1764 in Lecture {{Notes}} in {{Mathematics}}. Springer,
  Berlin, Germany, 2008.
\newblock \href {https://doi.org/10.1007/978-3-540-45330-7}
  {\path{doi:10.1007/978-3-540-45330-7}}.

\bibitem{duistermaatVariationCohomologySymplectic1982}
Johannes~J. Duistermaat and Gerrit~J. Heckman.
\newblock On the variation in the cohomology of the symplectic form of the
  reduced phase space.
\newblock {\em Inventiones Mathematicae}, 69(2):259--268, 1982.
\newblock \href {https://doi.org/10.1007/BF01399506}
  {\path{doi:10.1007/BF01399506}}.

\bibitem{dykema}
Ken Dykema and Nate Strawn.
\newblock Manifold structure of spaces of spherical tight frames.
\newblock {\em International Journal of Pure and Applied Mathematics},
  28(2):217--256, 2006.

\bibitem{edelmanBetaJacobiMatrixModel2008}
Alan Edelman and Brian~D. Sutton.
\newblock The beta-{{Jacobi}} matrix model, the {{CS}} decomposition, and
  generalized singular value problems.
\newblock {\em Foundations of Computational Mathematics}, 8(2):259--285, 2008.
\newblock \href {https://doi.org/10.1007/s10208-006-0215-9}
  {\path{doi:10.1007/s10208-006-0215-9}}.

\bibitem{sampler}
Mason Faldet.
\newblock {FUNTF-Sampler}.
\newblock \url{https://github.com/masonfaldet/FUNTF-Sampler}, 2025.

\bibitem{farrellLimitingEmpiricalSingular2011}
Brendan Farrell.
\newblock Limiting empirical singular value distribution of restrictions of
  discrete {{Fourier}} transform matrices.
\newblock {\em Journal of Fourier Analysis and Applications}, 17(4):733--753,
  2011.
\newblock \href {https://doi.org/10.1007/s00041-010-9156-z}
  {\path{doi:10.1007/s00041-010-9156-z}}.

\bibitem{fickusConstructingAllSelfadjoint2013}
Matthew Fickus, Dustin~G. Mixon, Miriam~J. Poteet, and Nate Strawn.
\newblock Constructing all self-adjoint matrices with prescribed spectrum and
  diagonal.
\newblock {\em Advances in Computational Mathematics}, 39(3--4):585--609, 2013.
\newblock \href {https://doi.org/10.1007/s10444-013-9298-z}
  {\path{doi:10.1007/s10444-013-9298-z}}.

\bibitem{app2}
Vivek~K. Goyal, Jelena Kova{\v{c}}evi{\'c}, and Jonathan~A. Kelner.
\newblock Quantized frame expansions with erasures.
\newblock {\em Applied and Computational Harmonic Analysis}, 10(3):203--233,
  2001.
\newblock \href {https://doi.org/10.1006/acha.2000.0340}
  {\path{doi:10.1006/acha.2000.0340}}.

\bibitem{guilleminConvexityPropertiesMoment1982}
Victor Guillemin and Shlomo Sternberg.
\newblock Convexity properties of the moment mapping.
\newblock {\em Inventiones Mathematicae}, 67(3):491--513, 1982.
\newblock \href {https://doi.org/10.1007/BF01398933}
  {\path{doi:10.1007/BF01398933}}.

\bibitem{hagaPolytopesEigenstepsFinite2016}
Tim Haga and Christoph Pegel.
\newblock Polytopes of eigensteps of finite equal norm tight frames.
\newblock {\em Discrete \& Computational Geometry}, 56(3):727--742, 2016.
\newblock \href {https://doi.org/10.1007/s00454-016-9799-x}
  {\path{doi:10.1007/s00454-016-9799-x}}.

\bibitem{haikinRandomSubsetsStructured2017}
Marina Haikin, Ram Zamir, and Matan Gavish.
\newblock Random subsets of structured deterministic frames have {{MANOVA}}
  spectra.
\newblock {\em Proceedings of the National Academy of Sciences of the United
  States of America}, 114(26):E5024--E5033, 2017.
\newblock \href {https://doi.org/10.1073/pnas.1700203114}
  {\path{doi:10.1073/pnas.1700203114}}.

\bibitem{nigel}
Nigel~J. Hitchin, Anders Karlhede, Ulf Lindstr{\"o}m, and Martin Ro{\v{c}}ek.
\newblock Hyperk{\"a}hler metrics and supersymmetry.
\newblock {\em Communications in Mathematical Physics}, 108(4):535--589, 1987.
\newblock \href {https://doi.org/10.1007/BF01214418}
  {\path{doi:10.1007/BF01214418}}.

\bibitem{app3}
Roderick~B. Holmes and Vern~I. Paulsen.
\newblock Optimal frames for erasures.
\newblock {\em Linear Algebra and its Applications}, 377:31--51, 2004.
\newblock \href {https://doi.org/10.1016/j.laa.2003.07.012}
  {\path{doi:10.1016/j.laa.2003.07.012}}.

\bibitem{sloan}
John Jasper, Emily~J. King, and Dustin~G. Mixon.
\newblock Game of {Sloanes}: {Best} known packings in complex projective space.
\newblock In Yue~M. Lu, Manos Papadakis, and Dimitri Van De~Ville, editors,
  {\em Wavelets and {Sparsity} {XVIII}}, page 111381E, San Diego, United
  States, 2019. SPIE.
\newblock \href {https://doi.org/10.1117/12.2527956}
  {\path{doi:10.1117/12.2527956}}.

\bibitem{marsdenReductionSymplecticManifolds1974}
Jerrold Marsden and Alan Weinstein.
\newblock Reduction of symplectic manifolds with symmetry.
\newblock {\em Reports on Mathematical Physics}, 5(1):121--130, 1974.
\newblock \href {https://doi.org/10.1016/0034-4877(74)90021-4}
  {\path{doi:10.1016/0034-4877(74)90021-4}}.

\bibitem{McDuff}
Dusa McDuff and Dietmar Salamon.
\newblock {\em Introduction to Symplectic Topology}.
\newblock Oxford {{Graduate Texts}} in {{Mathematics}}. Oxford University
  Press, Oxford, UK, 1995.
\newblock \href {https://doi.org/10.1093/oso/9780198794899.001.0001}
  {\path{doi:10.1093/oso/9780198794899.001.0001}}.

\bibitem{meyerSymmetriesIntegralsMechanics1973}
Kenneth~R. Meyer.
\newblock Symmetries and integrals in mechanics.
\newblock In {\em Dynamical {{Systems}} ({{Proc}}. {{Sympos}}., {{Univ}}.
  {{Bahia}}, {{Salvador}}, 1971)}, pages 259--272. Academic Press, New York,
  NY, USA, 1973.

\bibitem{shonk}
Tom Needham and Clayton Shonkwiler.
\newblock Toric symplectic geometry and full spark frames.
\newblock {\em Applied and Computational Harmonic Analysis}, 61:254--287, 2022.
\newblock \href {https://doi.org/10.1016/j.acha.2022.07.004}
  {\path{doi:10.1016/j.acha.2022.07.004}}.

\bibitem{conference}
Clayton Shonkwiler.
\newblock The geometry of constrained random walks and an application to frame
  theory.
\newblock In {\em 2018 {{IEEE Statistical Signal Processing Workshop}}
  ({{SSP}})}, pages 343--347, Freiburg, Germany, 2018. IEEE.
\newblock \href {https://doi.org/10.1109/SSP.2018.8450816}
  {\path{doi:10.1109/SSP.2018.8450816}}.

\bibitem{S-L}
Reyer Sjamaar and Eugene Lerman.
\newblock Stratified symplectic spaces and reduction.
\newblock {\em The Annals of Mathematics, Second Series}, 134(2):375--422,
  1991.
\newblock \href {https://doi.org/10.2307/2944350} {\path{doi:10.2307/2944350}}.

\bibitem{smithEfficientMonteCarlo1984}
Robert~L. Smith.
\newblock Efficient {{Monte Carlo}} procedures for generating points uniformly
  distributed over bounded regions.
\newblock {\em Operations Research}, 32(6):1296--1308, 1984.
\newblock \href {https://www.jstor.org/stable/170949} {\path{JSTOR:170949}}.

\bibitem{zimmermannNormalizedTightFrames2001a}
Georg Zimmermann.
\newblock Normalized tight frames in finite dimensions.
\newblock In Werner Haussmann, Kurt Jetter, and Manfred Reimer, editors, {\em
  Recent {{Progress}} in {{Multivariate Approximation}}}, pages 249--252.
  Birkh{\"a}user, Basel, 2001.
\newblock \href {https://doi.org/10.1007/978-3-0348-8272-9_20}
  {\path{doi:10.1007/978-3-0348-8272-9_20}}.

\end{thebibliography}

\end{document}